\newcommand{\abs}[1]{\left\vert#1\right\vert}
\newcommand{\mbp}{\mathbb{P}}
\newcommand{\mbe}{\mathbb{E}}
\newcommand{\var}[1]{\mathrm{Var}(#1)}
\newcommand{\Unif}[1]{\mathbb{U}(#1)}
\newcommand{\mrd}{\mathrm{d}}
\newcommand{\mbi}{\mathbb{I}}
\newcommand{\mlmc}{\hat\theta_{\mathrm{MLMC}}}
\newcommand{\seqmlmc}{\tilde\theta_{\mathrm{MLMC}}}
\crefname{hypothesis}{Hypothesis}{Hypotheses}
\title{Efficient risk estimation via nested multilevel quasi-Monte Carlo simulation
\thanks{Submitted to the editors DATE.
\funding{This work was supported by the National Key R\&D Program of China (No. 2016QY02D0301) and the National Science Foundation of China (No. 72071119, 12071154), and the Fundamental Research Funds for the Central Universities (No. 2019MS106).}}}
\author{Zhenghang Xu\thanks{Department of Mathematical Sciences, Tsinghua University, Beijing 100084, People's Republic of China (\email{xzh17@mails.tsinghua.edu.cn}, \email{wangxiaoqun@mail.tsinghua.edu.cn}).}
\and Zhijian He\thanks{Corresponding author. School of Mathematics, South China University of Technology, Guangzhou 510641, People's Republic of China (\email{hezhijian@scut.edu.cn}).}
\and Xiaoqun Wang\footnotemark[2]}
\begin{document}

\maketitle

\begin{abstract}
We consider the problem of estimating the probability of a large loss from a financial portfolio, where the future loss is expressed as a conditional expectation. Since the conditional expectation is intractable in most cases, one may resort to nested simulation. To reduce the complexity of nested simulation, we present a method
that combines multilevel Monte Carlo (MLMC) and quasi-Monte Carlo (QMC). In the outer simulation, we use Monte Carlo to generate financial scenarios. In the inner simulation, we use QMC to estimate the portfolio loss in each scenario. We prove that using QMC can accelerate the convergence rates in	 both the crude nested simulation and the multilevel nested simulation. Under certain conditions, the complexity of MLMC can be reduced to $O(\epsilon^{-2}(\log \epsilon)^2)$ by incorporating QMC. On the other hand, we find that MLMC encounters catastrophic coupling problem due to the existence of indicator functions. To remedy this, we propose a smoothed MLMC method which uses logistic sigmoid functions to approximate indicator functions. Numerical results show that the optimal complexity $O(\epsilon^{-2})$ is almost attained when using QMC methods in both MLMC and smoothed MLMC, even in moderate high dimensions.
\end{abstract}

\begin{keywords}
nested simulation; quasi-Monte Carlo; multilevel Monte Carlo; risk estimation
\end{keywords}

\begin{AMS}
  65C05, 62P05
\end{AMS}

\section{Introduction}
We consider the problem of estimating
\begin{equation}
\theta = \mbp (g(\omega)>c)=\mbe[\mbi \{g(\omega)>c\}]=\mbe[\mbi \{\mbe [X|\omega]>c\}]\label{eq:theta}
\end{equation}
via simulation for a given constant $c$,
where $g(\omega):=\mbe [X|\omega]$. The inner expectation of the one-dimensional random variable, $X$, is conditional on the value of the outer multidimensional random variable $\omega$. This nested expectation appears, for instance,
when estimating the probability of a large loss from a financial portfolio. If the portfolio consists of some complex financial derivatives (such as path-dependent options and exotic options) or the underlying model is complicated, the future loss $g(\omega)$ of the portfolio on a fixed period is only known as a conditional expectation with respect to the risk factor $\omega$, which does not have an analytical form of $\omega$. A typical approach to estimate such an expectation of a function of a conditional
expectation is to use nested estimation. Specifically, nested simulation refers
to a two-level simulation procedure. In the outer level,
one generates a number of scenarios of $\omega$. Then, in
the inner level, a number of samples of
$X$ are generated for each simulated $\omega$ to estimate the conditional expectation $\mbe [X|\omega]$; see \cite{broa:2011} and \cite{GJ10}.

Nested simulation has been widely studied in the literature due to its broad applicability, particularly in portfolio risk measurement. Gordy and Juneja \cite{GJ10} considered and analyzed uniform
nested simulation estimators which employ a constant number of inner samples across all scenarios in the outer level.
They showed that to achieve a root mean squared error (RMSE) of $\epsilon$, allocating proper computational effort in each level results in a total  cost of $O(\epsilon^{-3})$. Nested simulation can be made
more efficient by allocating computational
effort nonuniformly across scenarios. Particularly, Broadie et al. \cite{broa:2011} proposed an adaptive procedure to allocate computational effort to inner simulations. The resulting nonuniform nested simulation estimator enjoys a reduced complexity of $O(\epsilon^{-5/2})$ under certain conditions. Recently, Giles and Haji-Ali \cite{giles:2018} proposed to use multilevel Monte Carlo (MLMC) method in nested simulation. They showed that in the original MLMC, the complexity is $O(\epsilon^{-5/2})$. By incorporating the adaptive allocations procedure of \cite{broa:2011}, Giles and Haji-Ali \cite{giles:2018} showed that the complexity of MLMC can be reduced to $O(\epsilon^{-2}(\log \epsilon)^2)$ under certain conditions. For some applications of nested MLMC, we refer to \cite{giles:2018b,GilesGoda19,goda:2020,Goda18} and references therein.

MLMC is a sophisticated variance reduction technique introduced by Heinrich \cite{Hein1998} for parametric integration and by Giles \cite{Giles2008} for the estimation of the expectations arising from stochastic differential equations. Nowadays MLMC method has been extended extensively. For a thorough review of MLMC methods, we refer to \cite{Giles2015}. On the other hand, quasi-Monte Carlo (QMC) and randomized QMC (RQMC) methods are alternatives to improve the efficiency of traditional Monte Carlo; see \cite{Dick2013,lecu:2002,Nied1992} for details. QMC methods have achieved great success in finance applications, such as option pricing and hedging \cite{lecu:2009}. It is natural to incorporate (R)QMC in the MLMC framework. Giles and Waterhouse \cite{Giles2009} first attempted to apply QMC method into multilevel path simulation in financial problems. In recent years, multilevel QMC methods have received increasing attention
amongst researchers due to its broad applicability, particularly in problems of partial differential equations with random coefficients, see e.g., \cite{Dick2016,Kuo2017,Kuo2015} and in uncertainty quantification, see e.g., \cite{dick:2017,sch:2017}.

In this paper, we focus on the combination of MLMC and QMC in nested simulation. Specifically, in the outer simulation, we use Monte Carlo to generate a number of scenarios of $\omega$. But, in the inner simulation, we use QMC to estimate the portfolio loss in each scenario.
Our work is closely related to Goda et al. \cite{Goda18}, who used nested multilevel RQMC method to deal with the expected value of partial perfect information (EVPPI) problem. A central problem of EVPPI is to estimate an expectation of the form $\mbe[f(g(\omega))]$, where $g(\omega)=\mbe[X|\omega]$ and $f(\cdot)$ is a continuous function. As Goda et al. \cite{Goda18} pointed out, the multilevel RQMC estimator can achieve the optimal complexity of $O(\epsilon^{-2})$ under some mild conditions. However, for the problem \cref{eq:theta} considered in this paper, the performance function $f(\cdot)$ becomes an indicator function $\mbi(\cdot>c)$, making it much
harder than EVPPI  for MLMC algorithms \cite{giles:2018b}. Moreover, the antithetic
MLMC estimator for EVPPI used in \cite{Goda18} does not help to reduce
the variance convergence rate in our setting because the discontinuity in the indicator
function violates the differentiability requirements of the antithetic estimator \cite{giles:2018}. As a result, the efficiency of using the antithetic form in multilevel RQMC is subtle for estimating \cref{eq:theta}. The gain of using RQMC in nested MLMC is also unclear. Can the multilevel RQMC estimator achieve the optimal complexity of $O(\epsilon^{-2})$ or the sub-optimal complexity of $O(\epsilon^{-2}(\log \epsilon)^2)$? It is well-known that the performance of (R)QMC integration depends on the smoothness of the integrand and the dimension of the problem \cite{He2018,He2019,He2015,xie:2019}. So it is natural to ask how these factors affect the efficiency of the multilevel RQMC algorithm.

The main contribution of this work is to reduce the complexity of the MLMC method for financial risk management by using QMC methods in the multilevel scheme. We also discuss some considerations of using antithetic MLMC with RQMC.  In \cref{sec:review}, we review QMC methods and nested simulation. The complexity of the uniform nested simulation combined with QMC is analyzed. After introducing the basic MLMC method, we develop a multilevel QMC procedure for the problem \cref{eq:theta}. The effects of using the QMC method and the indicator function on the nested simulation are discussed. In \cref{sec:smooth}, we develop a new smoothed coupling method, which aims to overcome the catastrophic coupling caused by small differences between the ``coarse" and ``fine" estimates
for the conditional expectation. Some numerical experiments are performed in \cref{sec:numerical}. Finally, we conclude this
paper with some remarks in \cref{sec:conclusion}.

\section{Nested simulation and Multilevel Monte Carlo}
\label{sec:review}

\subsection{Nested simulation and quasi-Monte Carlo}
Our goal is to estimate an expectation of a function of a conditional
expectation via nested simulation, with the problem \cref{eq:theta} of interest.
In the uniform nested simulation, one takes
\begin{displaymath}
\hat \theta_{n,m} = \frac 1 n \sum_{i=1}^n \mbi \{\hat g_m(\omega_i)>c\},
\end{displaymath}
and
\begin{equation}
\hat g_m(y) = \frac 1 m\sum_{j=1}^m X_j(y),\label{eq:innersim}
\end{equation}
where $\omega_i$ are independent and identically distributed (iid) replications of $\omega$, and $X_j(y)$ are iid replications of $X$ given $\omega=y$. The quadrature rule $\hat g_m(y)$ is used to estimate the conditional expectation $g(y)=\mbe[X|\omega=y]$ in the inner simulation.

In this paper, we incorporate QMC methods within the inner simulation. Let $\Omega$ be the possible scenarios of the random variable $\omega$. To fit MC or QMC framework, we assume that given $\omega=y$, $X$ can be generated via the mapping
\begin{equation}\label{eq:psi}
X(y)= \psi(\bm u;y),\ y\in\Omega,
\end{equation}
for some function $\psi$, where $\bm u\sim \Unif{[0,1)^d}$. As a result, the inner estimator \cref{eq:innersim} is replaced by
\begin{equation}
\hat g_m(y) = \frac 1 m\sum_{j=1}^m \psi(\bm u_j;y),\label{eq:innerqmcsim}
\end{equation}
and $g(y) = \int_{[0,1)^d} \psi(\bm u;y) \mrd \bm u$.
If $\bm u_1,\dots,\bm u_m$ are iid uniform points over $[0,1)^d$, the approximation \cref{eq:innerqmcsim} refers to the MC method, which attains a probabilistic convergence rate of $O(m^{-1/2})$. If $\bm u_1,\dots,\bm u_m$ are QMC points (known as low discrepancy points), which are deterministic points chosen from $[0,1)^d$ and are more uniformly distributed than random points, the approximation \cref{eq:innerqmcsim} refers to the QMC method.

The error of the QMC quadrature can be bounded by the Koksma-Hlawka inequality (see \cite{Nied1992})
\begin{equation}
|g(y)-\hat g_m(y)|\le V_{\mathrm{HK}}\big(\psi(\cdot;y)\big)D^*(\bm u_1,\dots,\bm u_m),\label{eq:khineq}
\end{equation}
where $V_{\mathrm{HK}}\big(\psi(\cdot;y)\big)$ is the variation of the integrand $\psi(\cdot;y)$ for given $y$ in the sense of Hardy and Krause which measures the smoothness of $\psi(\cdot;y)$, and $D^*(\bm u_1,\dots,\bm u_m)$ is the star discrepancy which measures the uniformity of the points set $\{\bm u_1,\dots,\bm u_m\}$. The Koksma-Hlawka inequality \cref{eq:khineq} implies that for functions of finite variation, the convergence rate of QMC approximation is determined by the factor $D^*(\bm u_1,\dots,\bm u_m)$, which is of order $O(m^{-1}(\log m)^d)$ for low discrepancy points.

There are various constructions of QMC point sets in the literature \cite{dick:2017}, such as digital nets and lattice rule point sets. In this paper, we use $(t,d)$-sequences in base $b\ge 2$.
In practice, one uses RQMC points for ease of evaluating quadrature errors. RQMC retains the essential equi-distribution structure, while allowing a statistical error estimation based on independent replications. Moreover, RQMC is able to improve the rate of convergence for smooth integrands, such as the Owen's scrambling technique \cite{Owen1995}. For a survey on RQMC, we refer to \cite{lecu:2002}. It should be noted that, in both MC and RQMC settings, $\mbe[\hat g_m(y)]=g(y)$.

\begin{assum}\label{assm1}
	There exist a constant $\eta\ge 1$ and a function $\sigma^2(\cdot )\ge 0$ such that for any $y\in \Omega$,
	\begin{displaymath}
	\var{\hat g_m(y)} \le \frac{\sigma^2(y)}{m^\eta}.
	\end{displaymath}
\end{assum}

In the MC setting, we can take $\eta=1$ and $\sigma^2(y)=\var{X|\omega=y}$. In the RQMC setting, we can expect a larger value of $\eta$.  Indeed, a digit scrambling of \cite{Owen1995} applied to $(t, d)$-sequences leads to a variance of $O(m^{-2}(\log m)^{2d})$ for integrands of finite variation in the sense of Hardy and Krause, and a variance of $O(m^{-3}(\log m)^{d-1})$ for smooth enough integrands \cite{Owen1998}. Furthermore, for any square integrable integrand, the scrambled net variance $\var{\hat g_m(y)}$ has a conservative upper bound $M\var{X|\omega=y}/m$, where the constant $M$ depends on $t,\ d$ and $b$ \cite{Owen1997}. As a result, it is reasonable to assume that $\eta\ge 1$ for RQMC. If the function $\psi$ in \cref{eq:innerqmcsim} is sufficiently smooth, then $\eta\approx 2$ or even larger.

The next theorem is a generalization of Proposition 1 in \cite{GJ10}.
\begin{theorem}\label{thm:bias}
	Suppose that \cref{assm1} is satisfied, and let $f(\cdot)$ be the probability density function of $g(\omega)$. Assume the following:
\begin{itemize}
	\item The joint density $p_m(x,y)$ of $g(\omega)$ and $m^{\eta/2}[\hat{g}_m(\omega)-g(\omega)]$ and partial derivatives $(\partial /\partial x)p_m(x,y)$ and $(\partial^2 /\partial x^2)p_m(x,y)$ exist for each $m$ and $(x,y)$.
	\item For each $m\ge 1$, there exist functions $f_{i,m}(\cdot)$ such that
\begin{displaymath}
\bigg|\frac{\partial^i }{\partial x^i}p_m(x,y)\bigg|\le f_{i,m}(y), i=0,1,2.
\end{displaymath}
	In addition,
\begin{displaymath}	
\sup_{m\ge 1} \int \abs{y}^r f_{i,m}(y) \mrd y<\infty,\ for\ i=0,1,2\ and\ 0\le r\le 4.
\end{displaymath}
\end{itemize}
Then the bias of the nested estimator $\hat \theta_{n,m}$ asymptotically satisfies
\begin{equation}\label{eq:bias}
|\mbe[\hat \theta_{n,m}-\theta] |\le  \frac{|\Theta'(c)|}{m^{\eta}} + O(m^{-3\eta/2}),
\end{equation}
where
\begin{displaymath}
\Theta(c)=\frac 1 2 f(c)\mbe[\sigma^2(\omega)|g(\omega)=c],
\end{displaymath}
and $\sigma^2(y)$ is given in \cref{assm1}.
\end{theorem}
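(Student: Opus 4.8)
The plan is to generalize the argument behind Proposition~1 of \cite{GJ10}, replacing the inner Monte Carlo rate by the exponent $\eta$ of \cref{assm1}. First I would note that, because the outer samples $\omega_i$ are i.i.d.\ and $\mbe[\hat g_m(\omega)\mid\omega]=g(\omega)$ in both the MC and RQMC settings, $\mbe[\hat\theta_{n,m}]=\mbp(\hat g_m(\omega)>c)$ for every $n$, so the bias equals $\mbp(\hat g_m(\omega)>c)-\theta$. Introduce $X:=g(\omega)$ and $Y:=m^{\eta/2}[\hat g_m(\omega)-g(\omega)]$, so that $\hat g_m(\omega)=X+m^{-\eta/2}Y$, and let $p_m(x,y)$ be their joint density (provided by the hypotheses, with $\int p_m(x,y)\,\mrd y=f(x)$). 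Writing $\mbp(\hat g_m(\omega)>c)$ and $\theta$ as integrals of $p_m$ over $\{x+m^{-\eta/2}y>c\}$ and $\{x>c\}$ respectively and subtracting, one gets
\begin{displaymath}
\mbe[\hat\theta_{n,m}-\theta]=\int_{-\infty}^{\infty}\int_{c-m^{-\eta/2}y}^{c}p_m(x,y)\,\mrd x\,\mrd y,
\end{displaymath}
with the usual convention $\int_a^b=-\int_b^a$ when $a>b$.

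Next I would Taylor-expand the inner integral $G_y(h):=\int_{c-h}^{c}p_m(x,y)\,\mrd x$ about $h=0$: since $G_y(0)=0$, $G_y'(h)=p_m(c-h,y)$, $G_y''(h)=-(\partial/\partial x)p_m(c-h,y)$, and $G_y'''(h)=(\partial^2/\partial x^2)p_m(c-h,y)$, the second-order Taylor formula with Lagrange remainder yields, for each $y$, a $\xi_y$ with $|\xi_y|\le m^{-\eta/2}|y|$ such that
\begin{align*}
G_y(m^{-\eta/2}y)&=m^{-\eta/2}y\,p_m(c,y)-\tfrac12 m^{-\eta}y^2\,\tfrac{\partial}{\partial x}p_m(c,y)\\
&\quad+\tfrac16 m^{-3\eta/2}y^3\,\tfrac{\partial^2}{\partial x^2}p_m(c-\xi_y,y).
\end{align*}
Integrating over $y$ and interchanging integration and the finite sum — justified by $|(\partial^i/\partial x^i)p_m(x,y)|\le f_{i,m}(y)$ together with the uniform bounds $\sup_m\int|y|^r f_{i,m}(y)\,\mrd y<\infty$ for $i=0,1,2$ and $0\le r\le 4$ — splits the bias into three pieces.

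The crucial point is that the first piece, $m^{-\eta/2}\int y\,p_m(c,y)\,\mrd y$, vanishes: since $\mbe[\hat g_m(\omega)\mid\omega]=g(\omega)$ we have $\mbe[Y\mbi\{X\le t\}]=m^{\eta/2}\mbe[\mbi\{g(\omega)\le t\}(\mbe[\hat g_m(\omega)\mid\omega]-g(\omega))]=0$ for all $t$, hence $\int y\,p_m(x,y)\,\mrd y\equiv 0$ (for every $x$, using the assumed differentiability in $x$), in particular at $x=c$; this is exactly what lifts the bias from $O(m^{-\eta/2})$ to $O(m^{-\eta})$. For the second piece I would write, with $v_m(y):=\var{\hat g_m(y)}$, $\int y^2 p_m(x,y)\,\mrd y=m^\eta f(x)\,\mbe[v_m(\omega)\mid g(\omega)=x]$ (using $\mbe[(\hat g_m(\omega)-g(\omega))^2\mid\omega]=v_m(\omega)$), differentiate under the integral sign at $x=c$, and invoke \cref{assm1} — read as the asymptotically sharp statement $m^\eta v_m(y)\to\sigma^2(y)$ — to identify this contribution with $-\Theta'(c)/m^\eta$ for $\Theta(c)=\tfrac12 f(c)\mbe[\sigma^2(\omega)\mid g(\omega)=c]$. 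The third piece is bounded in modulus by $\tfrac16 m^{-3\eta/2}\int|y|^3 f_{2,m}(y)\,\mrd y=O(m^{-3\eta/2})$ by the uniform third-moment bound. Collecting the three pieces and applying the triangle inequality gives \cref{eq:bias}.

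The main obstacle is not any single calculation but the analytic bookkeeping that legitimizes the term-by-term manipulations: differentiating $\int y^2 p_m(x,y)\,\mrd y$ in $x$ under the integral sign, and — more delicately — bounding the cubic remainder \emph{uniformly in $m$}, which is precisely why the hypotheses postulate dominating functions $f_{i,m}$ with $m$-independent weighted moments up to order four. A secondary subtlety is the identification of the leading term with $\Theta'(c)$: \cref{assm1} as stated is only an upper bound on $\var{\hat g_m(y)}$, so one must treat it as asymptotically tight (i.e. $\sigma^2(y)=\lim_m m^\eta\var{\hat g_m(y)}$) and have enough domination to pass this limit through the conditional expectation $\mbe[\,\cdot\mid g(\omega)=x]$ and then through the $x$-derivative at $x=c$; in the pure MC case ($\eta=1$, $v_m(y)=\var{X\mid\omega=y}/m$ exactly) this step is exact and the argument reduces to Proposition~1 of \cite{GJ10}.
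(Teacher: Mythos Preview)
Your proposal is correct and follows essentially the same route as the paper: you write the bias as $\int\!\int_{c-m^{-\eta/2}y}^{c}p_m(x,y)\,\mrd x\,\mrd y$, Taylor-expand to second order in the $x$-variable (you expand the inner integral $G_y(h)$ whereas the paper expands $p_m(x,y)$ at $x=c$ and then integrates, which are equivalent), kill the $m^{-\eta/2}$ term via the tower property $\mbe[\hat g_m(\omega)-g(\omega)\mid\omega]=0$, identify the $m^{-\eta}$ term with $\Theta'(c)$ through $\int y^2 p_m(x,y)\,\mrd y=m^\eta f(x)\mbe[\mathrm{Var}(\hat g_m(\omega)\mid\omega)\mid g(\omega)=x]$, and bound the cubic remainder uniformly via the $f_{2,m}$ hypothesis. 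Your closing remark that \cref{assm1} is literally only an upper bound on $\mathrm{Var}(\hat g_m(y))$ is well taken---the paper itself silently uses it as an equality when writing $\mbe[m^{\eta}(\hat g_m(\omega)-g(\omega))^2\mid\omega]=\sigma^2(\omega)$---so you are being more careful than the original on this point.
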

\begin{proof}
Let
\begin{displaymath}
\theta_m:=\mbe(\hat\theta_{n,m})=\mbp(\hat g_m(\omega)>c).
\end{displaymath}
Note that
\begin{displaymath}
\theta_m=\int_\mathbb{R}\int_{c-ym^{-\eta/2}}^\infty p_m(x,y)dxdy,
\end{displaymath}
so that
\begin{displaymath}
\theta_m-\theta=\int_\mathbb{R}\int_{c-ym^{-\eta/2}}^c p_m(x,y)dxdy.
\end{displaymath}
Consider the Taylor series expansion of the density function $p_m(x,y)$ at $x=c$,
\begin{displaymath}
p_m(x,y)=p_m(c,y)+(x-c)\frac{\partial}{\partial x}p_m(c,y)+\frac{(x-c)^2}{2}\frac{\partial^2}{\partial x^2}p_m(\tilde c,y),
\end{displaymath}
where $\tilde c$ is a number between $c$ and $x$. From this expansion and the assumptions in the theorem, it follows that
\begin{equation}\label{eq:dm}
\theta_m-\theta=\int_\mathbb{R}\frac{y}{m^{\eta/2}}p_m(c,y)dy
-\int_\mathbb{R}\frac{y^2}{2m^\eta}\frac{\partial}{\partial x}p_m(c,y)dy+O(m^{-3\eta/2}).
\end{equation}
For the first term on the right hand side of \cref{eq:dm}, we have
\begin{align*}
\int_\mathbb{R}\frac{y}{m^{\eta/2}}p_m(c,y)dy&=\frac{f(c)}{m^{\eta/2}}\int_\mathbb{R}y\cdot\frac{p_m(c,y)}{f(c)}dy\\
&=\frac{f(c)}{m^{\eta/2}}\mbe[m^{\eta/2}(\hat{g}_m(\omega)-g(\omega))|g(\omega)=c].
\end{align*}
This term is zero because
\begin{displaymath}
\mbe[m^{\eta/2}(\hat{g}_m(\omega)-g(\omega))|g(\omega)=c]
=\mbe[\mbe[m^{\eta/2}(\hat{g}_m(\omega)-g(\omega))|g(\omega)=c,\omega]]=0.
\end{displaymath}
For the second term on the right hand side of \cref{eq:dm}, we have
\begin{align*}
\int_\mathbb{R}\frac{y^2}{2m^\eta}\frac{\partial}{\partial x}p_m(c,y)dy&=\frac{1}{2m^\eta}\frac{d}{dc}\int_{\mathbb{R}}y^2p_m(c,y)dy\\
&=\frac{1}{2m^\eta}\frac{d}{dc}\bigg[f(c)\int_{\mathbb{R}}y^2\cdot\frac{p_m(c,y)}{f(c)}dy\bigg]\\
&=\frac{1}{2m^\eta}\frac{d}{dc}\bigg[f(c)\mbe[m^{\eta}(\hat{g}_m(\omega)-g(\omega))^2|g(\omega)=c]\bigg]\\
&=\frac{1}{2m^\eta}\frac{d}{dc}\bigg[f(c)\mbe[\mbe[m^{\eta}(\hat{g}_m(\omega)-g(\omega))^2|\omega]|g(\omega)=c]\bigg]\\
&=\frac{1}{2m^\eta}\frac{d}{dc}\bigg[f(c)\mbe[\sigma^2(\omega)|g(\omega)=c]\bigg]\\
&=\frac{\Theta'(c)}{m^{\eta}}
\end{align*}
By \eqref{eq:dm}, the proof is completed.
\end{proof}

We now use \cref{thm:bias}  to analyze the mean squared error (MSE) of the nested estimator. By \cref{eq:bias}, it is easy to see that
\begin{displaymath}
\begin{aligned}
\var{\hat \theta_{n,m}}&=\frac{1}{n}\var{\mbi(\hat g_m(\omega_1)>c)}=\frac{\theta_m(1-\theta_m)}{n}\\
&=\frac{\theta(1-\theta)}{n}+\frac{(\theta_m-\theta)(1-\theta_m)}{n}+\frac{\theta(\theta-\theta_m)}{n}\\
&\le \frac{\theta(1-\theta)}{n} + O(n^{-1}m^{-\eta}).
\end{aligned}
\end{displaymath}
So we have
\begin{align*}
\mbe[(\hat \theta_{n,m}-\theta)^2] &= \var{\hat \theta_{n,m}}+(\mbe[\hat \theta_{n,m}-\theta])^2 \\
&\le \frac{\theta(1-\theta)}{n} + \frac{\Theta'(c)^2}{m^{2\eta}} + O(m^{-5\eta/2}) + O(n^{-1}m^{-\eta}).
\end{align*}

To achieve an RMSE of $\epsilon$, this suggests the optimal allocations $n=O(\epsilon^{-2})$ and $m=O(\epsilon^{-1/\eta})$. The total computational cost is then $O(\epsilon^{-2-1/\eta})$. In the MC setting where $\eta=1$, the complexity is $O(\epsilon^{-3})$, see \cite{GJ10}. In RQMC setting, it is possible to achieve $\eta\approx 2$, so that the complexity is around $O(\epsilon^{-5/2})$. Broadie et al. \cite{broa:2011} used adaptive allocations in the inner simulation to reduce the nested MC down to $O(\epsilon^{-5/2})$. Giles and Haji-Ali \cite{giles:2018} showed that in the original MLMC, the  complexity is also $O(\epsilon^{-5/2})$. By incorporating the adaptive allocations idea of \cite{broa:2011}, Giles and Haji-Ali \cite{giles:2018} showed that the complexity of MLMC can be reduced to $O(\epsilon^{-2}(\log \epsilon)^2)$.
We next focus on using the framework of multilevel method to reduce the complexity of RQMC-based nested simulation.

\subsection{MLMC estimators}\label{mlmcestimators}
We review briefly the basic idea of MLMC. Let $P=\theta = \mbi\{g(\omega)>c\}$, which requires an infinite cost to evaluate. Instead of dealing with $P$ directly, we consider a sequence of random variables $P_0,P_1,\dots$ with increasing approximation accuracy to $P$ but with increasing cost per sample. The $\ell$th level approximation of $P$ is defined as $P_\ell=\mbi\{\hat{g}_{m_\ell}(\omega)>c\}$, where $m_\ell=2^{\ell+\ell_0}$ in our setting and $\ell_0\ge 0$. Due to the linearity of expectation, we have the following telescoping representation
\begin{equation}\label{eq:telescope}
\mbe[P_L] = E[P_0] + \sum_{\ell=1}^L \mbe[P_{\ell}-P_{\ell-1}].
\end{equation}
Let $Y_\ell=P_{\ell}-P_{\ell-1}$ for $\ell \ge 1$ and $Y_0=P_0$, we further have
\begin{equation}\label{eq:sumeq}
\mbe[P_L] = \sum_{\ell=0}^L \mbe[Y_\ell].
\end{equation}
The MLMC method uses the above equality and estimates each term on the right hand side of \cref{eq:sumeq} independently. The resulting MLMC estimator is given by
$$\mlmc = \sum_{\ell=0}^L Z_\ell$$
with
$$Z_\ell = \frac{1}{N_\ell}\sum_{i=1}^{N_\ell}Y_\ell^{(i)},$$
where $Y_\ell^{(1)},\dots,Y_\ell^{(N_\ell)}$ are iid replications of $Y_\ell$ for $\ell=0,\dots, L$. Denote the variance and the computational cost of $Y_\ell$ by $V_\ell$ and $C_\ell$, respectively. The MSE of $\mlmc$ is then given by

\begin{displaymath}
\mbe[(\mlmc-\theta)^2] = \sum_{\ell=0}^L \frac{V_\ell}{N_\ell} + (\mbe[P_L-\theta])^2.
\end{displaymath}
The total cost of $\mlmc$ is $C=\sum_{\ell=0}^LN_\ell C_\ell$.

The pioneering work by Giles \cite{Giles2008} established the following theorem for MLMC.
\begin{theorem}\label{thm:mlmc}
If there are constants $\alpha,\beta,\gamma,c_1,c_2,c_3$ such that $\alpha\ge \min(\beta,\gamma)/2$ and
\begin{itemize}
	\item $|\mbe[P_\ell -\theta]|\le c_1 m_\ell^{-\alpha}$,
	\item $V_\ell \le c_2 m_\ell^{-\beta}$,
	\item $C_\ell \le c_3m_\ell^{\gamma}$,
\end{itemize}
then there exists a constant $c_4>0$ such that for any $\epsilon<1/e$, MLMC estimator $\mlmc$ has an MSE bound $\mbe[(\mlmc-\theta)^2]\le \epsilon^2$ with a total computational cost $C$ bounded by
\begin{displaymath}
C\le \begin{cases}
c_4\epsilon^{-2}, &\beta>\gamma,\\
c_4\epsilon^{-2}(\log \epsilon)^2, &\beta=\gamma,\\
c_4\epsilon^{-2-(\gamma-\beta)/\alpha}, &\beta<\gamma.
\end{cases}
\end{displaymath}
\end{theorem}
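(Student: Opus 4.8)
The plan is to follow the classical argument of Giles \cite{Giles2008,Giles2015}, splitting the target MSE budget $\epsilon^2$ into a squared-bias part and a variance part, each allotted $\epsilon^2/2$. First I would fix the number of levels $L$ so that the squared bias is controlled: since $|\mbe[P_L-\theta]|\le c_1 m_L^{-\alpha}$ with $m_L = 2^{L+\ell_0}$, choosing $L$ to be the smallest integer with $c_1 m_L^{-\alpha}\le \epsilon/\sqrt 2$ yields $(\mbe[P_L-\theta])^2\le \epsilon^2/2$ and, by minimality of $L$, the bounds $m_L = O(\epsilon^{-1/\alpha})$ and $L = O(\log(1/\epsilon))$. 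The hypothesis $\epsilon<1/e$ guarantees $\log(1/\epsilon)>1$, so this $L$ is a well-defined nonnegative integer and $(\log\epsilon)^2$ is a meaningful positive quantity.

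Next I would allocate the sample sizes $N_\ell$ so as to minimize the total cost $\sum_\ell N_\ell C_\ell$ subject to the variance constraint $\sum_\ell V_\ell/N_\ell \le \epsilon^2/2$. A Lagrange-multiplier (equivalently, Cauchy--Schwarz) argument gives the canonical choice
\[
N_\ell = \left\lceil 2\epsilon^{-2}\sqrt{V_\ell/C_\ell}\, \sum_{k=0}^L\sqrt{V_k C_k}\,\right\rceil ,
\]
and a one-line computation using $N_\ell \ge 2\epsilon^{-2}\sqrt{V_\ell/C_\ell}\sum_k\sqrt{V_kC_k}$ shows $\sum_\ell V_\ell/N_\ell \le \epsilon^2/2$, so the overall MSE bound $\le\epsilon^2$ holds. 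The ceiling adds at most $\sum_\ell C_\ell$ to the cost, whence
\[
C = \sum_{\ell=0}^L N_\ell C_\ell \;\le\; 2\epsilon^{-2}\Big(\sum_{\ell=0}^L\sqrt{V_\ell C_\ell}\Big)^{2} + \sum_{\ell=0}^L C_\ell .
\]

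It then remains to estimate the two sums via $V_\ell\le c_2 m_\ell^{-\beta}$ and $C_\ell\le c_3 m_\ell^{\gamma}$, so that $\sqrt{V_\ell C_\ell}\le \sqrt{c_2c_3}\, m_\ell^{(\gamma-\beta)/2}$ and $\sum_\ell \sqrt{V_\ell C_\ell}$ becomes a geometric series with ratio $2^{(\gamma-\beta)/2}$. If $\beta>\gamma$ the series converges, giving $\sum_\ell\sqrt{V_\ell C_\ell}=O(1)$; if $\beta=\gamma$ every term is $O(1)$ and the sum is $O(L)=O(\log\epsilon)$; if $\beta<\gamma$ the sum is dominated by its last term, $O(m_L^{(\gamma-\beta)/2})=O(\epsilon^{-(\gamma-\beta)/(2\alpha)})$. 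Squaring and multiplying by $2\epsilon^{-2}$ produces the three claimed rates. The main obstacle — and the only place the assumption $\alpha\ge\min(\beta,\gamma)/2$ is genuinely needed — is verifying that the residual term $\sum_\ell C_\ell = O(m_L^{\gamma})=O(\epsilon^{-\gamma/\alpha})$ never dominates: when $\beta\ge\gamma$ one needs $\gamma/\alpha\le 2$, i.e.\ $\alpha\ge\gamma/2$, and when $\beta<\gamma$ one needs $\gamma/\alpha\le 2+(\gamma-\beta)/\alpha$, i.e.\ $\alpha\ge\beta/2$; in both cases this is precisely $\alpha\ge\min(\beta,\gamma)/2$. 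Collecting the dominant contributions and absorbing all constants into a single $c_4$ then completes the proof.
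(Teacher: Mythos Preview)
Your proposal is correct and is precisely the classical Giles argument. The paper does not actually prove this theorem; it simply states it and attributes it to \cite{Giles2008}, so there is no paper proof to compare against beyond noting that your sketch is the standard one from that reference.
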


The constants $\alpha$ and $\beta$ in \cref{thm:mlmc} describe the rates of bias and variance decreasing, respectively, which are usually called \textit{weak convergence} and \textit{strong convergence}. The constant $\gamma$ controls the increasing of computing budget for each sample.

Now we develop our nested multilevel QMC estimator. Here we use the following form for coupling the two consecutive levels: $Y_0=P_0=\mbi\{\hat{g}_{m_0}(\omega)>c\}$, and for $\ell\ge 1$,

\begin{displaymath}
Y_\ell =P_\ell-P_{\ell-1} =\mbi\{\hat{g}_{m_\ell}(\omega)>c\}-\mbi\{\hat{g}_{m_{\ell-1}}(\omega)>c\},
\end{displaymath}
where
\begin{displaymath}
\hat g_{m_\ell}(y) = \frac 1 {m_\ell}\sum_{j=1}^{m_\ell} \psi(\bm u_j;y),
\end{displaymath}
and $\psi$ is given by \cref{eq:psi}.

It is easy to see that $\mbe[Y_\ell]=\mbe[P_{\ell}-P_{\ell-1}]$ for $\ell \ge 1$. Also, $C_\ell =  O(m_\ell)$, i.e., $\gamma=1$. \cref{thm:bias} suggests that $\alpha= \eta\ge 1/2$ in \cref{thm:mlmc}.

We next study the variance of $Y_\ell$. Note that for $\ell\ge 1$, we have
\begin{equation}
\var{Y_\ell}\le 2\var{P_{\ell}-P} +2\var{P_{\ell-1}-P},\label{eq:vars}
\end{equation}
since $\var{A+B}\le 2[\var{A}+\var{B}]$ for any random variables $A,B$ with finite variances. It thus suffices to study the decay rate of $\var{P_{\ell}-P}$, which can be described by value of the constant $\beta$.

\begin{assum}\label{assum:var}
	Assume that the density of the random variable $|g(\omega)-c|/\sigma(\omega)$, where $\sigma(\omega)$ is the square root of $\sigma^2(\omega)$ in \cref{assm1}, denoted by $\rho(\cdot)$, exists. Moreover, we assume that there exist constants $\rho_0>0$ and $x_0>0$ such that $\rho(x)\le \rho_0$ for all $x\in[0,x_0]$.
\end{assum}

The next theorem is a generalization of Proposition 2.2 in \cite{giles:2018}.

\begin{theorem}\label{thm:beta}
Suppose that \cref{assm1} and \cref{assum:var} are satisfied. Then
	\begin{displaymath}
	\var{P_{\ell}-P}\le \mbe[(P_{\ell}-P)^2] = O(m_\ell^{-\eta/2}).
    \end{displaymath}
\end{theorem}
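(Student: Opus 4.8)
The plan is to bound $\mbe[(P_\ell-P)^2]$ by computing the probability that the coarse and fine indicators disagree, i.e. that $\hat g_{m_\ell}(\omega)$ and $g(\omega)$ fall on opposite sides of the threshold $c$. Since $P_\ell-P\in\{-1,0,1\}$, we have $\mbe[(P_\ell-P)^2]=\mbp(\hat g_{m_\ell}(\omega)>c,\,g(\omega)\le c)+\mbp(\hat g_{m_\ell}(\omega)\le c,\,g(\omega)>c)$. The key observation is that a disagreement can only occur when $g(\omega)$ is close to $c$, and the ``closeness'' scale is governed by the size of the inner error $\hat g_{m_\ell}(\omega)-g(\omega)$. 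Writing $\Delta_\ell:=\hat g_{m_\ell}(\omega)-g(\omega)$, a disagreement forces $|g(\omega)-c|\le|\Delta_\ell|$, so
\begin{displaymath}
\mbe[(P_\ell-P)^2]\le \mbp\big(|g(\omega)-c|\le |\Delta_\ell|\big).
\end{displaymath}

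The next step is to split on the size of $|\Delta_\ell|$ relative to $\sigma(\omega)$ and invoke \cref{assum:var}. Conditioning on $\omega$, for any threshold $\lambda>0$ I would bound
\begin{displaymath}
\mbp\big(|g(\omega)-c|\le |\Delta_\ell|\,\big|\,\omega\big)\le \mbi\{|g(\omega)-c|\le \lambda\sigma(\omega)\} + \mbp\big(|\Delta_\ell|>\lambda\sigma(\omega)\,\big|\,\omega\big),
\end{displaymath}
where the first term handles the event that $\omega$ is in the ``dangerous'' region near $c$ and the second uses a Markov/Chebyshev bound together with \cref{assm1}: $\mbp(|\Delta_\ell|>\lambda\sigma(\omega)\mid\omega)\le \var{\hat g_{m_\ell}(\omega)\mid\omega}/(\lambda^2\sigma^2(\omega))\le 1/(\lambda^2 m_\ell^\eta)$. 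Taking expectation over $\omega$, the first term becomes $\mbp(|g(\omega)-c|/\sigma(\omega)\le\lambda)=\int_0^\lambda\rho(x)\,\mrd x\le \rho_0\lambda$ provided $\lambda\le x_0$, using \cref{assum:var}. Hence for all sufficiently large $\ell$ (so that the optimal $\lambda$ lies below $x_0$),
\begin{displaymath}
\mbe[(P_\ell-P)^2]\le \rho_0\lambda + \frac{1}{\lambda^2 m_\ell^\eta}.
\end{displaymath}

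Finally I would optimize over $\lambda$: balancing $\rho_0\lambda$ against $\lambda^{-2}m_\ell^{-\eta}$ gives $\lambda\asymp m_\ell^{-\eta/3}$, which yields a bound of order $m_\ell^{-\eta/3}$ — but this is weaker than the claimed $m_\ell^{-\eta/2}$, so the split must be done more carefully. The right move is to use the variance tail bound with a fixed (not $\ell$-dependent) structure on the near-threshold region; more precisely, one should directly estimate $\mbp(|g(\omega)-c|\le|\Delta_\ell|)$ by conditioning and writing it as $\mbe\big[\mbp(|g(\omega)-c|\le|\Delta_\ell|\mid\omega)\big]$ and bounding the inner probability, when $|g(\omega)-c|$ is in the bounded-density regime, by $2\rho_0\,\mbe[|\Delta_\ell|\mid\omega]/\sigma(\omega)\le 2\rho_0(\mbe[\Delta_\ell^2\mid\omega])^{1/2}/\sigma(\omega)\le 2\rho_0 m_\ell^{-\eta/2}$ via Jensen and \cref{assm1}, while the contribution from $|g(\omega)-c|$ outside $[0,x_0\sigma(\omega)]$ is controlled by a variance tail that decays at least as fast. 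Putting the two pieces together gives $\mbe[(P_\ell-P)^2]=O(m_\ell^{-\eta/2})$. The main obstacle is exactly this last bookkeeping: one must set up the conditioning so that \cref{assum:var}'s local density bound is applied on a scale proportional to $\sigma(\omega)$ (matching the natural scale of $\Delta_\ell$) rather than on an $\ell$-dependent scale, otherwise one loses the exponent; handling the region where $|g(\omega)-c|/\sigma(\omega)>x_0$ requires only that $\rho$ is a genuine density (so that region has the tail contributions absorbed), and the integrability hypotheses from \cref{thm:bias} are not needed here.
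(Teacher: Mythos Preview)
Your first split at a fixed threshold $\lambda$ is correct and, as you recognize, yields only $O(m_\ell^{-\eta/3})$. The gap is in your second attempt. You propose to bound the conditional probability $\mbp(|g(\omega)-c|\le|\Delta_\ell|\mid\omega)$ ``in the bounded-density regime'' by $2\rho_0\,\mbe[|\Delta_\ell|\mid\omega]/\sigma(\omega)$. This cannot be right: conditional on $\omega$, the quantity $|g(\omega)-c|/\sigma(\omega)$ is a \emph{constant}, not a random variable with density $\rho$, so the bound $\rho_0$ from \cref{assum:var} has no place in any estimate that is conditional on $\omega$. What you are implicitly invoking is an inequality of the form $\mbp(X\le Y)\le \rho_0\,\mbe[Y]$ with $X=|g(\omega)-c|/\sigma(\omega)$ and $Y=|\Delta_\ell|/\sigma(\omega)$; that would hold if $X$ and $Y$ were independent, but here both depend on $\omega$ and the inequality fails in general for dependent pairs.

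The paper's repair is short. After conditioning on $\omega$ and applying Chebyshev, keep the full bound
\[
\mbp\big(|\Delta_\ell|\ge |g(\omega)-c|\,\big|\,\omega\big)\ \le\ \min\!\Big(1,\ \big(|g(\omega)-c|/\sigma(\omega)\big)^{-2}\,m_\ell^{-\eta}\Big),
\]
which is a deterministic function of the single random variable $X=|g(\omega)-c|/\sigma(\omega)$. Now taking expectation over $\omega$ legitimately brings in the density $\rho$ of $X$:
\[
\mbe[(P_\ell-P)^2]\ \le\ \int_0^\infty \min(1,\,x^{-2}m_\ell^{-\eta})\,\rho(x)\,\mrd x\ \le\ \rho_0\!\int_0^\infty \min(1,\,x^{-2}m_\ell^{-\eta})\,\mrd x\ +\ x_0^{-2}m_\ell^{-\eta},
\]
and the elementary integral on the right equals $2m_\ell^{-\eta/2}$. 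In other words, a single cutoff $\lambda$ loses the exponent; retaining the $\min(1,\cdot)$ is tantamount to using all cutoffs simultaneously, and integrating it against the bounded density is exactly what delivers the rate $\eta/2$.
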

\begin{proof}
Let us start from
\begin{align*}
\mbe\bigg[(\mbi\{\hat{g}_{m_\ell}(\omega)>c\}-\mbi\{g(\omega)>c\})^2\big|\omega\bigg]
&=\mathbb{P}\bigg[|\mbi\{\hat{g}_{m_\ell}(\omega)>c\}-\mbi\{g(\omega)>c\}|=1\big|\omega\bigg]\\
&\le\mathbb{P}\bigg[|\hat{g}_{m_\ell}(\omega)-g(\omega)|\ge|g(\omega)-c|\big|\omega\bigg].
\end{align*}
By Chebyshev's inequality, we have
\begin{align*}
\mathbb{P}\bigg[|\hat{g}_{m_\ell}(\omega)-g(\omega)|\ge|g(\omega)-c|\big|\omega\bigg]
&\le\min\bigg(1,|g(\omega)-c|^{-2}\var{\hat{g}_{m_\ell}(\omega)\big|\omega}\bigg)\\
&\le\min\bigg(1,|g(\omega)-c|^{-2}\frac{\sigma^2(\omega)}{m_\ell^\eta}\bigg).
\end{align*}
Taking expectation over $\omega$ yields
\begin{align*}
\mbe\bigg[(\mbi\{\hat{g}_{m_\ell}(\omega)>c\}-\mbi\{g(\omega)>c\})^2\bigg] &
\le\int_0^\infty\min(1,x^{-2}m_\ell^{-\eta})\rho(x)dx\\
&\le\rho_0\int_0^\infty \min(1,x^{-2}{m_\ell^{-\eta}})dx+\min(1,x_0^{-2}m_\ell^{-\eta})\\
&\le 2\rho_0m^{-\eta/2}_\ell+x^{-2}_0m^{-\eta}_\ell.
\end{align*}
Hence, the variance is $O(m^{-\eta/2}_\ell)$.
\end{proof}

Based on \cref{eq:vars} and \cref{thm:beta}, we have $\var{Y_\ell}=O(m_\ell^{-\beta})$ with $\beta = \eta/2$. Since $\gamma=1$, by \cref{thm:mlmc}, the total cost of MLQMC thus becomes
\begin{equation}\label{eq:complexity}
C= \begin{cases}
O(\epsilon^{-2}), &\eta>2,\\
O(\epsilon^{-2}(\log \epsilon)^2), &\eta=2,\\
O(\epsilon^{-3/2-1/\eta}), &\eta\in[1,2).
\end{cases}
\end{equation}

It is a common strategy to use an antithetic form for coupling the consecutive levels, i.e.,
\begin{displaymath}
Y_\ell = \mbi\{\hat{g}_{m_\ell}(\omega)>c\}-\frac{1}{2}\left(\mbi\{\hat{g}_{m_{\ell-1}}^{(1)}(\omega)>c\}+\mbi\{\hat{g}_{m_{\ell-1}}^{(2)}(\omega)>c\}\right),
\end{displaymath}
where
\begin{displaymath}
\hat{g}_{m_{\ell-1}}^{(i)}(\omega)=\frac 1 {m_{\ell-1}}\sum_{j=1+(i-1)m_{\ell-1}}^{im_{\ell-1}} \psi(\bm u_j;\omega),\ i=1,2.
\end{displaymath}
Antithetic sampling can sometimes reduce variance apparently, see \cite{GilesGoda19} and \cite{Goda18}. But it should be cautious to adapt antithetic sampling when the underlying function is discontinuous. It is required that
$$\mbp[\hat{g}_{m_{\ell-1}}^{(1)}(\omega)>c|\omega]= \mbp[\hat{g}_{m_{\ell-1}}^{(2)}(\omega)>c|\omega]$$
in order to ensure the telescoping representation \cref{eq:telescope} under RQMC scheme.  This can be achieved if the first half RQMC points $\bm u_1,\dots,\bm u_{m_{\ell-1}}$ have the same joint distribution as that of the second half RQMC points $\bm u_{m_{\ell-1}+1},\dots,\bm u_{m_{\ell}}$. As pointed out by Goda et al. \cite{Goda18}, the well-known explicitly constructed $(t, d)$-sequences in base $b=2$ satisfy this condition if using Owen's scrambling method \cite{Owen1995}. However, antithetic sampling does not change the variance convergence rate in this problem because the discontinuity in the indicator function violates the differentiability requirements of the antithetic estimator.

\section{Smoothed MLMC method}\label{sec:smooth}
Recall that
\begin{displaymath}
Y_\ell =P_\ell-P_{\ell-1} =\mbi\{\hat{g}_{m_\ell}(\omega)>c\}-\mbi\{\hat{g}_{m_{\ell-1}}(\omega)>c\}.
\end{displaymath}
The ``fine" estimator $\hat g_{m_\ell}(\omega)$ and the ``coarse" estimator $\hat g_{m_{\ell-1}}(\omega)$ conditional the same scenario $\omega$ have small differences in most situations, especially for large $\ell$, making $Y_\ell$ different from zero only in a tiny proportion of the scenarios. Moraes et al. \cite{Alvaro2016} called this phenomenon as \textit{catastrophic coupling}.

This may lead to a higher kurtosis, which is defined by
\begin{displaymath}
\kappa=\frac{\mbe[(Y_\ell-\mbe[Y_\ell])^4]}{(\var {Y_\ell})^2}.
\end{displaymath}
This is a challenge for both MLMC and MLQMC. As shown in Hammouda et al. \cite{Hamm2020}, the sample variance of $Y_\ell^{(1)},\dots,Y_\ell^{(N_\ell)}$ has a standard error of
\begin{displaymath}
\sigma_{\ell}:=\frac{\var{Y_\ell}}{\sqrt {N_\ell}}\sqrt{\kappa-1+\frac{2}{N_\ell-1}}.
\end{displaymath}
And the high kurtosis makes it challenging to estimate $V_\ell$ accurately in deeper levels, since there are less samples in deeper levels under MLMC scheme \cite{Hamm2020}.

In fact, the high-kurtosis phenomenon is inevitable due to the indicator function: $P_\ell$ takes values in $\{0,1\}$, then $Y_\ell=P_\ell-P_{\ell-1}$ ($\ell\ge1$) takes values in $\{-1,0,1\}$. As a result, the iid replications of $Y_\ell$, $Y^{(1)}_\ell,\dots,Y^{(N_\ell)}_\ell$ take values in $\{-1,0,1\}$ as well. The sample mean of  $Y^{(1)}_\ell,\dots,Y^{(N_\ell)}_\ell$, denoted by $\bar Y_\ell$, is usually negligible because of small differences between $P_\ell$ and $P_{\ell-1}$. We find that the sample kurtosis
\begin{align}
\hat\kappa&=\frac{\frac{1}{N_\ell}\sum_{i=1}^{N_\ell}(Y_\ell^{(i)}-\bar Y_\ell)^4}{(\frac{1}{N_\ell}\sum_{i=1}^{N_\ell}(Y_\ell^{(i)}-\bar Y_\ell)^2)^2}
\label{eq:kv}\\&\approx\frac{\frac 1 {N_\ell} \sum_{i=1}^{N_\ell} {Y^{(i)}_\ell}^4}{(\frac 1 {N_\ell}\sum_{i=1}^{N_\ell}{Y^{(i)}_\ell}^2)^2}=\frac{1}{\frac 1 {N_\ell}\sum_{i=1}^{N_\ell}{Y^{(i)}_\ell}^2}\approx\frac 1 {\mathcal{S}^2_\ell},\label{eq:orik}
\end{align}
where $\mathcal{S}^2_\ell=\frac 1 {N_\ell}\sum_{i=1}^{N_\ell}(Y^{(i)}_\ell-\bar Y_\ell)^2$ is the sample variance.
So the kurtosis increases with the same rate as the variance decreases.

It should be noted that this phenomenon is due to the structure of the indicator function. Moreover, the indicator function eliminates further the difference between $\hat g_{m_\ell}(\omega)$ and $\hat g_{m_{\ell-1}}(\omega)$, then this aggravates the catastrophic coupling. This phenomenon enlightens us to replace the indicator function with a smoother function. To this end, we introduce the logistic sigmoid function
\begin{displaymath}
S(x)=\frac{1}{1+\exp(-x)},\ x\in\mathbb{R}.
\end{displaymath}
Logistic sigmoid function is widely applied in traditional classification task, acting as activation function in deep learning, see e.g. \cite{Hastie2016} and \cite{LeCun2015}. Based on this function, we construct a family of sigmoid-like functions
\begin{displaymath}
S(x;k)=\frac{1}{1+\exp(-kx)},\ x\in\mathbb{R},
\end{displaymath}
 where $k>0$.
It is obvious that $S(x;k)$ has the following properties:
\begin{itemize}
	\item $S(x;k)$ takes value in $(0,1)$ for any $k$;
	\item $S(x;k)$ is centrosymmetric about $S(0;k)=\frac{1}{2}$ for any $k$;
	\item $S(x;k)$ converges to $\mbi\{x>0\}$ as $k\rightarrow \infty$ for any fixed $x$.
\end{itemize}

\cref{fig:sigmoid} presents several examples of $S(x;k)$. It can be seen that the sigmoid-like functions approximate the indicator function. When the value of $k$ gets larger, the approximation of $S(x;k)$ to $\mbi(x>0)$ gets better.
\begin{figure}[htbp]
  \centering
  \includegraphics[width=8cm]{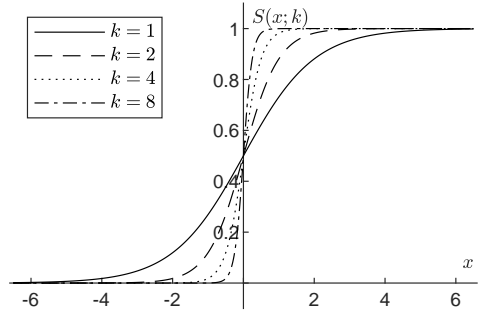}
  \caption{$S(x;k)$ with different $k$s.}
  \label{fig:sigmoid}
\end{figure}

Now we are ready to develop a smoothed MLMC method.  Define the $\ell$th level approximation of $P=\mbi\{g(\omega)>c\}$ as
\begin{displaymath}
\tilde P_\ell=S^{(k_0,r)}_\ell(\hat{g}_{m_\ell}(\omega)-c),
\end{displaymath}
 instead of $P_\ell=\mbi\{\hat{g}_{m_\ell}(\omega)>c\}$ used in \cref{mlmcestimators}, where $S^{(k_0,r)}_\ell(x)=S(x;k_0r^\ell)$, $k_0>0$ is a constant and $r>1$ controls the rate of $S^{(k_0,r)}_\ell(x)$ converging to $\mbi\{x>0\}$ as $\ell\to \infty$.

Similar to \cref{eq:telescope}, we have the following telescoping representation
\begin{displaymath}
\mbe[\tilde P_L] = E[\tilde P_0] + \sum_{\ell=1}^L \mbe[\tilde P_{\ell}-\tilde P_{\ell-1}].
\end{displaymath}
Let $\tilde Y_\ell=\tilde P_{\ell}-\tilde P_{\ell-1}$ for $\ell \ge 1$ and $\tilde Y_0=\tilde P_0$. It is obvious that
\begin{displaymath}
\tilde P_L = \sum_{\ell=0}^L \tilde Y_\ell,
\end{displaymath}
and
\begin{displaymath}
\mbe[\tilde P_L] = \sum_{\ell=0}^L \mbe[\tilde Y_\ell].
\end{displaymath}
This yields a new MLMC estimator
\begin{displaymath}
\seqmlmc = \sum_{\ell=0}^L \tilde Z_\ell
\end{displaymath}
with
\begin{displaymath}
\tilde Z_\ell = \frac{1}{N_\ell}\sum_{j=1}^{N_\ell}\tilde Y_\ell^{(i)},
\end{displaymath}
where $\tilde Y_\ell^{(1)},\dots,\tilde Y_\ell^{(N_\ell)}$ are iid replications of $\tilde Y_\ell$ for $\ell=0,\dots, L$.
Notice that $\tilde P_L$ converges to $P$ as $L$ goes to infinity, just like $P_L$ does.
Denoting the variance and cost of each sample of $\tilde Y_\ell$ by $\tilde V_\ell$ and $\tilde C_\ell$, the MSE of $\seqmlmc$ is given by
\begin{displaymath}
\mbe[(\seqmlmc-\theta)^2] = \sum_{\ell=0}^L \frac{\tilde V_\ell}{N_\ell} + (\mbe[\tilde P_L-\theta])^2.
\end{displaymath}
The total cost of $\seqmlmc$ is $\tilde C=\sum_{\ell=0}^LN_\ell \tilde C_\ell$. It is critical to verify the strong convergence. To this end, we make the following assumption.
\begin{assum}\label{assum:density}
Let $\Pi_\ell(x)$ denote the the cumulative distribution function of the random variable $\hat g_{m_\ell}(\omega)-c$. Assume that the densities of these variables, denoted by $\pi_\ell(x)$ for $\ell=0,1,\dots$, exist in a common neighborhood of the origin, say $(-\delta_0,\delta_0)$. Assume also that there exists a maximum $\pi^*_\ell=\sup_{x\in(-\delta_0,\delta_0)}\pi_\ell(x)<+\infty$ for all $\ell$ and there is a constant $C_\pi$ such that $\sup_{\ell}\pi_{\ell}^*<C_\pi$.
\end{assum}
The first part of \cref{assum:density} is similar to the assumptions in \cref{thm:bias}, but here we only require the densities to exist in a neighborhood of the origin. For the later part of \cref{assum:density}, heuristically, we have $\lim_{\ell\rightarrow\infty}\pi_\ell(x)=\pi(x)$, where $\pi(x)$ is density of $g(\omega)$. Then we can expect that $\pi_\ell^*$ can be controlled by a constant $C_\pi$.
\begin{theorem}\label{thm:seqconvergence}
Suppose that the conditions of \cref{assm1}, \cref{assum:var} and \cref{assum:density} are satisfied. Then
\begin{displaymath}
	\var{\tilde P_{\ell}-P}= O(\max(r^{-\ell},m_\ell^{-\eta/2})).
\end{displaymath}
\end{theorem}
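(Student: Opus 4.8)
The plan is to bound $\var{\tilde P_\ell - P}\le \mbe[(\tilde P_\ell - P)^2]$ and then split the error, via the triangle inequality, into a \emph{smoothing} part and a \emph{sampling} part by writing
\begin{displaymath}
\tilde P_\ell - P = \Big(S(\hat g_{m_\ell}(\omega)-c;k_0 r^\ell) - \mbi\{\hat g_{m_\ell}(\omega)>c\}\Big) + \Big(\mbi\{\hat g_{m_\ell}(\omega)>c\} - \mbi\{g(\omega)>c\}\Big),
\end{displaymath}
so that $(\tilde P_\ell-P)^2 \le 2A_\ell^2 + 2B_\ell^2$, where $A_\ell$ is the first bracket and $B_\ell$ the second. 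The term $\mbe[B_\ell^2]$ is exactly $O(m_\ell^{-\eta/2})$ by \cref{thm:beta}, so the real work is in controlling $\mbe[A_\ell^2]$.

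For the smoothing term, I would first record the elementary bound $|S(x;k)-\mbi\{x>0\}|\le e^{-k|x|}$, valid for all $x\in\mathbb{R}$ and $k>0$, which follows by checking the cases $x>0$ and $x<0$ directly from $S(x;k)=(1+e^{-kx})^{-1}$ (and trivially at $x=0$). Applying this with $x=\hat g_{m_\ell}(\omega)-c$ and $k=k_0 r^\ell$ gives $A_\ell^2\le e^{-2k_0 r^\ell|\hat g_{m_\ell}(\omega)-c|}$, hence
\begin{displaymath}
\mbe[A_\ell^2]\le \int_{\mathbb{R}} e^{-2k_0 r^\ell|x|}\pi_\ell(x)\,\mrd x,
\end{displaymath}
where $\pi_\ell$ is the density from \cref{assum:density}. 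I would split this integral at $|x|=\delta_0$: on $\{|x|<\delta_0\}$ use $\pi_\ell(x)\le \pi_\ell^*\le C_\pi$ and enlarge the domain to all of $\mathbb{R}$, giving a bound $C_\pi\int_{\mathbb{R}}e^{-2k_0 r^\ell|x|}\,\mrd x = C_\pi/(k_0 r^\ell)=O(r^{-\ell})$; on $\{|x|\ge\delta_0\}$ bound $e^{-2k_0 r^\ell|x|}\le e^{-2k_0 r^\ell\delta_0}$ and use that $\pi_\ell$ integrates to at most $1$, so this piece is $O(e^{-2k_0 r^\ell\delta_0})$, exponentially small in $r^\ell$ and thus negligible compared with $r^{-\ell}$. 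Therefore $\mbe[A_\ell^2]=O(r^{-\ell})$.

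Combining the two estimates yields $\mbe[(\tilde P_\ell-P)^2]\le 2\mbe[A_\ell^2]+2\mbe[B_\ell^2] = O(r^{-\ell})+O(m_\ell^{-\eta/2}) = O(\max(r^{-\ell},m_\ell^{-\eta/2}))$, and since the variance is dominated by the second moment the claim follows. I do not anticipate a serious obstacle: the only point needing a little care is the tail region $|x|\ge\delta_0$, where \cref{assum:density} provides no density bound, but there the exponential factor is small enough that the crude ``density integrates to at most one'' estimate suffices. A secondary point worth stating cleanly is the uniformity in $\ell$ of the $O(r^{-\ell})$ constant: it depends only on $C_\pi$, $k_0$, $\delta_0$, which is precisely what the hypothesis $\sup_\ell \pi_\ell^* < C_\pi$ in \cref{assum:density} guarantees.
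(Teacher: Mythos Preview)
Your proof is correct and follows essentially the same route as the paper: the same splitting $\tilde P_\ell - P = (\tilde P_\ell - P_\ell) + (P_\ell - P)$, the same appeal to \cref{thm:beta} for the sampling part, and the same $|x|\lessgtr\delta_0$ split for the smoothing part, with your elementary bound $|S(x;k)-\mbi\{x>0\}|\le e^{-k|x|}$ yielding a slightly cleaner computation than the paper's direct integration of $[S_\ell^{(k_0,r)}(x)]^2$. One small notational point: since \cref{assum:density} only posits a density on $(-\delta_0,\delta_0)$, write the expectation as a Stieltjes integral $\int_{\mathbb R} e^{-2k_0 r^\ell|x|}\,d\Pi_\ell(x)$ and pass to $\pi_\ell(x)\,dx$ only on the interior piece (as the paper does), so that your tail estimate reads ``total probability at most one'' rather than ``$\pi_\ell$ integrates to at most one.''
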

\begin{proof}
First, we have
\begin{align*}
\mbe[(\tilde P_\ell-P_\ell)^2]&=\int_{-\infty}^{-\delta_0}[S_\ell^{(k_0,r)}(x)]^2d\Pi_\ell(x)
+\int_{-\delta_0}^0[S_\ell^{(k_0,r)}(x)]^2\pi_\ell(x)dx \\
&+\int_{0}^{\delta_0}[S_\ell^{(k_0,r)}(-x)]^2\pi_\ell(x)dx
+\int_{\delta_0}^{+\infty}[S_\ell^{(k_0,r)}(-x)]^2d\Pi_\ell(x)\\
&\le 2[S_\ell^{(k_0,r)}(-\delta_0)]^2+2\pi^*_\ell\int_{-\delta_0}^0[S_\ell^{(k_0,r)}(x)]^2dx\\
&=\frac{2}{[1+\exp(k_0r^\ell\delta_0)]^2}+\bigg[\ln\frac 2 {1+e^{-k_0r^\ell\delta_0}}+\frac 1 2 -\frac{1}{1+e^{-k_0r^\ell\delta_0}}\bigg]\cdot\frac{2C_\pi}{k_0r^\ell}\\
&=O(r^{-\ell}).
\end{align*}

By \cref{thm:beta}, we have  $\mbe[(P_\ell-P)^2]=O(m_\ell^{-\eta/2})$.
It then follows that
\begin{displaymath}
\mbe[(\tilde P_\ell-P)^2]\le2\mbe[(\tilde P_\ell-P_\ell)^2]+2\mbe[(P_\ell-P)^2]=O(\max(r^{-\ell},m_\ell^{-\eta/2})).
\end{displaymath}
\end{proof}

Similarly, we can verify the weak convergence of the smoothed MLMC method.

\begin{theorem}\label{thm:weakconvergence}
Suppose that the conditions of \cref{thm:bias} and \cref{assum:density} are satisfied. Then
    \begin{equation}\label{eq:weakresult}
	|\mbe[\tilde P_\ell-\theta]|=O(\max(r^{-\ell},m_\ell^{-\eta})).
    \end{equation}
\end{theorem}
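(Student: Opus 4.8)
The plan is to split the bias of the smoothed level-$\ell$ estimator into the classical nested bias and a \emph{smoothing error}, and to bound each separately:
\[
\abs{\mbe[\tilde P_\ell-\theta]}\le\abs{\mbe[P_\ell-\theta]}+\abs{\mbe[\tilde P_\ell-P_\ell]}.
\]
For the first term, observe that $\mbe[P_\ell]=\mbp(\hat g_{m_\ell}(\omega)>c)$ is precisely the quantity denoted $\theta_m$ in the proof of \cref{thm:bias} with $m=m_\ell$; since the hypotheses of \cref{thm:bias} are assumed, \cref{eq:bias} gives $\abs{\mbe[P_\ell-\theta]}\le\abs{\Theta'(c)}m_\ell^{-\eta}+O(m_\ell^{-3\eta/2})=O(m_\ell^{-\eta})$. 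Everything therefore reduces to showing that the smoothing error is $O(r^{-\ell})$.

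Using the notation of \cref{assum:density}, let $\Pi_\ell$ denote the distribution function of $\hat g_{m_\ell}(\omega)-c$ and set $h_\ell(x):=S(x;k_0r^\ell)-\mbi\{x>0\}$, so that $\tilde P_\ell-P_\ell=h_\ell\big(\hat g_{m_\ell}(\omega)-c\big)$ and $\mbe[\tilde P_\ell-P_\ell]=\int_{\mathbb R}h_\ell(x)\,\mrd\Pi_\ell(x)$. The function $h_\ell$ is bounded by $1/2$, satisfies $\abs{h_\ell(x)}=\big(1+e^{k_0r^\ell\abs{x}}\big)^{-1}$ (decreasing in $\abs{x}$), and is globally integrable with $\int_{\mathbb R}\abs{h_\ell(x)}\,\mrd x=\tfrac{2\ln 2}{k_0r^\ell}$, by the substitution $u=k_0r^\ell x$. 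I would then split the integral at $\abs{x}=\delta_0$. On $\{\abs{x}\ge\delta_0\}$ one has $\abs{h_\ell(x)}\le\big(1+e^{k_0r^\ell\delta_0}\big)^{-1}$, so that part of the integral is at most $\big(1+e^{k_0r^\ell\delta_0}\big)^{-1}$, which decays (doubly exponentially) faster than $r^{-\ell}$. On $\{\abs{x}<\delta_0\}$, \cref{assum:density} provides the density $\pi_\ell$ with $\pi_\ell(x)\le\pi^*_\ell\le C_\pi$, whence
\[
\abs{\int_{-\delta_0}^{\delta_0}h_\ell(x)\pi_\ell(x)\,\mrd x}\le C_\pi\int_{\mathbb R}\abs{h_\ell(x)}\,\mrd x=\frac{2C_\pi\ln 2}{k_0}\,r^{-\ell}.
\]
Adding the two pieces gives $\abs{\mbe[\tilde P_\ell-P_\ell]}=O(r^{-\ell})$, and combining with the first term yields $\abs{\mbe[\tilde P_\ell-\theta]}=O(r^{-\ell})+O(m_\ell^{-\eta})=O(\max(r^{-\ell},m_\ell^{-\eta}))$, as claimed. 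The argument parallels that of \cref{thm:seqconvergence}, the sole difference being that the integrand here is $h_\ell$ rather than $h_\ell^2$, so the crude density bound already suffices.

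I do not anticipate a genuine obstacle. The only points needing attention are: (i) \cref{assum:density} controls the density of $\hat g_{m_\ell}(\omega)-c$ only on the fixed neighbourhood $(-\delta_0,\delta_0)$, which forces the separate tail estimate on $\{\abs{x}\ge\delta_0\}$ -- harmless since $h_\ell$ is negligibly small there; and (ii) one must verify $\int_{\mathbb R}\abs{h_\ell}<\infty$, evaluate it, and check that the low levels (where $m_\ell=2^{\ell+\ell_0}$ is merely a constant) are absorbed into the $O(\cdot)$. As a side remark, this bound on the smoothing error is not tight: $h_\ell$ is odd by the centrosymmetry $S(x;k)+S(-x;k)=1$, so if $\pi_\ell$ were additionally Lipschitz near the origin uniformly in $\ell$, the leading contribution $\pi_\ell(0)\int h_\ell$ would cancel and one would obtain $O(r^{-2\ell})$; this refinement is not needed for the stated rate.
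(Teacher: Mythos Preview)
Your proposal is correct and follows essentially the same approach as the paper: split the bias into the nested-simulation bias (handled by \cref{thm:bias}) and the smoothing error, then bound the latter by splitting the integral at $\abs{x}=\delta_0$, using the uniform density bound $C_\pi$ on the central part and the exponential decay of the sigmoid tail on $\{\abs{x}\ge\delta_0\}$. Your presentation via the single function $h_\ell(x)=S(x;k_0r^\ell)-\mbi\{x>0\}$ with $\abs{h_\ell(x)}=(1+e^{k_0r^\ell\abs{x}})^{-1}$ and the closed-form $\int_{\mathbb R}\abs{h_\ell}=\tfrac{2\ln 2}{k_0r^\ell}$ is slightly more compact than the paper's four-piece split, and your side remark on the oddness of $h_\ell$ (potentially yielding $O(r^{-2\ell})$ under a uniform Lipschitz assumption on $\pi_\ell$) is an observation the paper does not make.
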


\begin{proof}
Note that
\begin{equation}\label{eq:weakconvergence}
|\mbe[\tilde P_\ell-\theta]|\le\mbe[|\tilde P_\ell-P_\ell|]+|\mbe[P_\ell-\theta]|.
\end{equation}
By \cref{thm:bias}, $|\mbe[P_\ell-\theta]|=O(m^{-\eta}_\ell)$. Under \cref{assum:density}, we have
\begin{displaymath}
\begin{aligned}
\mbe[|\tilde P_\ell-P_\ell|]&=\int_{-\infty}^{-\delta_0}S_\ell^{(k_0,r)}(x)d\Pi_\ell(x)
+\int_{-\delta_0}^0S_\ell^{(k_0,r)}(x)\pi_\ell(x)dx \\
&+\int_{0}^{\delta_0}S_\ell^{(k_0,r)}(-x)\pi_\ell(x)dx
+\int_{\delta_0}^{+\infty}S_\ell^{(k_0,r)}(-x)d\Pi_\ell(x)\\
&\le 2S_\ell^{(k_0,r)}(-\delta_0)+2\pi^*_\ell\int_{-\delta_0}^0S_\ell^{(k_0,r)}(x)dx\\
&=\frac{2}{1+\exp(k_0r^\ell\delta_0)}+\ln\frac 2 {1+\exp(-k_0r^\ell\delta_0)}\cdot\frac{2C_\pi}{k_0r^\ell}\\
&=O(r^{-\ell}).
\end{aligned}
\end{displaymath}
The result \cref{eq:weakresult} follows from \cref{eq:weakconvergence}.
\end{proof}

For a given $\eta$, it is sufficient to take $r=2^{\eta/2}$, and then $O(r^{-\ell})\le O(m_\ell^{-\eta/2})$, since $m_\ell=O(2^\ell)$. As a result, the strong convergence of the smoothed method is the same as the original ML(Q)MC methods while the weak convergence may slow down to $O(m^{-\eta/2}_\ell)$ because of the smoothed method. When $\eta=2$, we can take $r=2$ such that there are constants $c_2,c_3$ such that $\tilde V_\ell\le c_2m_\ell^{-1}$ and $\tilde C_\ell\le c_3m_\ell^{-1}$, which means $\beta=\gamma=1$, then the cost is in the second regime in \cref{thm:mlmc}, where weak convergence makes little difference. So $r=2$ is sufficient for this situation to get the expected improvement.

The next theorem can be obtained immediately from \cref{thm:mlmc}.
\begin{theorem}
 Suppose that \cref{assm1} is satisfied with $\eta=2$ and $r=2$. Then for any $\epsilon<1/e$, the smoothed MLMC estimator $\seqmlmc$ has an MSE bound $\mbe[(\seqmlmc-\theta)^2]\le \epsilon^2$ with a total computational cost of $O(\epsilon^{-2}(\log \epsilon)^2)$.
\end{theorem}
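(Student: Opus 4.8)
The plan is to show that the smoothed estimator $\seqmlmc$ meets the three hypotheses of \cref{thm:mlmc} with $\alpha=1$, $\beta=1$, $\gamma=1$, and then simply read off the cost from the middle ($\beta=\gamma$) regime of that theorem. So the entire content is already contained in \cref{thm:seqconvergence} and \cref{thm:weakconvergence} (together with the cost bookkeeping), and the proof is a substitution of the parameters $\eta=2$, $r=2$.

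First I would fix the weak rate. Invoking \cref{thm:weakconvergence}, one has $|\mbe[\tilde P_\ell-\theta]| = O(\max(r^{-\ell},m_\ell^{-\eta}))$; with $\eta=2$, $r=2$, and $m_\ell=2^{\ell+\ell_0}$, the term $m_\ell^{-\eta}=O(2^{-2\ell})$ is dominated by $r^{-\ell}=2^{-\ell}$, so $|\mbe[\tilde P_\ell-\theta]| = O(2^{-\ell}) = O(m_\ell^{-1})$, giving the first hypothesis of \cref{thm:mlmc} with $\alpha=1$. Next I would bound the level variances $\tilde V_\ell=\var{\tilde Y_\ell}$. For $\ell\ge 1$ the same elementary inequality used in \cref{eq:vars} gives $\var{\tilde Y_\ell}\le 2\var{\tilde P_\ell-P}+2\var{\tilde P_{\ell-1}-P}$, and \cref{thm:seqconvergence} yields $\var{\tilde P_\ell-P}=O(\max(r^{-\ell},m_\ell^{-\eta/2}))$, which for $\eta=2$, $r=2$ is $O(2^{-\ell})=O(m_\ell^{-1})$; likewise $\var{\tilde P_{\ell-1}-P}=O(2^{-(\ell-1)})=O(m_\ell^{-1})$, so $\tilde V_\ell=O(m_\ell^{-1})$, i.e. $\beta=1$. (The level $\ell=0$ contributes only a finite constant since $\tilde Y_0=\tilde P_0$ takes values in $(0,1)$.) For the cost, forming $\tilde Y_\ell$ requires evaluating $\hat g_{m_\ell}(\omega)$ and $\hat g_{m_{\ell-1}}(\omega)$, hence $\tilde C_\ell = O(m_\ell+m_{\ell-1}) = O(m_\ell)$, so $\gamma=1$.

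Finally I would check the side condition $\alpha\ge\min(\beta,\gamma)/2$, which here reads $1\ge 1/2$ and holds, so \cref{thm:mlmc} applies; since $\beta=\gamma=1$ we are in its second case, and the total cost is bounded by $c_4\epsilon^{-2}(\log\epsilon)^2$ for an MSE of at most $\epsilon^2$, which is the assertion. There is no genuine obstacle here — the work was done in the earlier theorems — but the one point worth flagging is that the choice $r=2$ is precisely calibrated: it is large enough that the smoothing error $r^{-\ell}$ does not dominate the strong rate $m_\ell^{-\eta/2}=m_\ell^{-1}$ (so $\beta$ stays equal to $1$), and it lands us exactly on $\beta=\gamma$, where the weak rate $\alpha=1$ (degraded from $\alpha=\eta$ by the smoothing) is still more than enough; a smaller $r$ would push $\beta$ below $\gamma$ and into the worse third regime, while a larger $r$ buys nothing.
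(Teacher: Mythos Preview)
Your argument is correct and matches the paper's own reasoning: the paper states that the theorem ``can be obtained immediately from \cref{thm:mlmc}'' after observing in the preceding paragraph that with $\eta=2$ and $r=2$ one gets $\tilde V_\ell\le c_2 m_\ell^{-1}$, $\tilde C_\ell\le c_3 m_\ell$, and weak rate $O(m_\ell^{-1})$, landing in the $\beta=\gamma$ regime. You have simply made the verification of $\alpha=\beta=\gamma=1$ explicit via \cref{thm:seqconvergence}, \cref{thm:weakconvergence}, and the cost count, which is exactly the intended route.
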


\cref{thm:seqconvergence} and \cref{thm:weakconvergence} guarantee that the smoothed MLMC method converges to the true value under a certain rate. However, it should be noticed that $r=2$ is sufficient but unnecessary for the smoothed MLMC method.
In fact, when the dimension is relatively high, we would like to take $r$ a relatively small value. On the one hand, the efficiency of QMC method is affected by high dimensionality leading an $\eta$ value smaller than 2. As a result, a smaller $r$ can also match to $O(m_\ell^{-\eta/2})$ rate. On the other hand, if the value of $r$ is smaller, the coupling between the ``fine" estimator $\tilde P_\ell$ and the ``coarse" estimator $\tilde P_{\ell-1}$ is better and then this leads to a smaller variance. By this way, we can get a better $\beta$.

In addition, the logistic sigmoid function $S_\ell^{(k_0,r)}(x)$ with a smaller $r$ can maintain milder derivative in the neighborhood of the origin which relieves the catastrophic coupling. Antithetic sampling can also benefit from the differentiability and mild derivative of the logistic sigmoid functions with small $r$. If $r$ is taken too large, $S_\ell^{(k_0,r)}(x)$ converges to indicator function rapidly, making the smoothed MLMC method inefficient. So in our numerical studies, we take $r=2$ in one-dimensional problems, while take $r=\sqrt 2$ in multiple-dimensional problems. We take $k_0=8$ in $\tilde P_\ell=S^{(k_0,r)}_\ell(\hat{g}_{m_\ell}(\omega)-c)$, which guarantees the sigmoid functions not differing from the indicator function too much at the beginning.

Finally, we define \textit{kurtosis and variance factor} (\textit{KVF}) of level $\ell$ as the product of kurtosis and variance in level $\ell$,
$$\textrm{KVF}=\kappa V_\ell.$$
KVF can be estimated by
$$\widehat{\textrm{KVF}}=\hat\kappa \mathcal{S}^2_\ell,$$
where  $\hat\kappa$ is  defined by \cref{eq:kv}, and $\mathcal{S}^2_\ell=\frac 1 {N_\ell}\sum_{i=1}^{N_\ell}(Y^{(i)}_\ell-\bar Y_\ell)^2$.
This factor describes the relationship between the rates of variance decreasing and kurtosis increasing, which can reveal the effectiveness of a method dealing with the high-kurtosis phenomenon. For the original MLMC and MLQMC methods, the KVF is very close to 1 for mildly large levels as explained in \cref{eq:orik}. Therefore, if a method can achieve a smaller KVF, then with a same $V_\ell$, the kurtosis is not so large, which means the high-kurtosis phenomenon is relieved and the estimation of $V_\ell$ can be more accurate.

\section{Numerical study}\label{sec:numerical}
In the numerical study we consider a portfolio that consists of European options written on $d$ stocks whose price dynamics follow the Black-Scholes model. For simplicity, we assume that the stock returns are the same, denoted by $\mu$, while risk-free interest rate is $\mu_0$. Price dynamics of the stocks $\bm S_t=(S^1_t,\dots,S^d_t)$ evolve according to
\begin{displaymath}
\frac{d S^i_t}{S_t^i}=\mu' d t+\sum_{j=1}^d\sigma_{ij}d W^i_t,\ i=1,\dots,d,
\end{displaymath}
where $\mu'=\mu$ under the real-world probability measure, and $\mu'=\mu_0$ under the risk-neutral probability measure. Here, $\bm W_t=(W^1_t,\dots,W^d_t)$ is a $d$-dimensional standard Brownian motion which represents $d$ risk factors in the model. We have,
\begin{displaymath}
S^i_t=S^i_0\exp\{(\mu'-\frac{1}{2}\sum_{j=1}^d\sigma^2_{ij})t+\sum_{j=1}^d\sigma_{ij}W_t^j\},\ i=1,\dots,d,
\end{displaymath}
where $\bm S_0=(S^1_0,\dots,S^d_0)$ are the initial prices of stocks.

We assume that the maturities of all the European options in the portfolio are the same, denoted by $T$. We want to measure the portfolio risk at a future time $\tau$ ($\tau<T$). In the simulation, we first simulate the random variable $\omega=\bm S_\tau=(S^1_\tau,\dots,S^d_\tau)$ under real-world probability measure, which denotes the prices of stocks at the risk horizon $\tau$ as the outer sample. And then simulate $\bm S_T=(S^1_T,\dots,S^d_T)$ under the risk-neutral probability measure which is the prices of stocks at maturity $T$ given $\omega$ as the inner samples. Denote by $V_0=\sum_{i=1}^d v^i_0$ the initial value of the portfolio, where each $v^i_0$ is known by the Black-Scholes formula \cite{Hull2017}. Then the portfolio value change is
\begin{displaymath}
g(\omega) := V_0-\mbe[V_T(\bm S_T)|\omega],
\end{displaymath}
where $V_T(\bm S_T)$ is the discounted payoff of the portfolio at time $T$ which is a known function of $\bm S_T$.

Our target is to estimate the loss probability $\theta=\mbp[g(\omega)>c]$ for a given threshold $c$.
We take $m_\ell=32\times 2^\ell$ in all experiments and $k_0=8$ for the smoothed methods. We compare MLMC, MLQMC, and smoothed MLQMC (denoted by SMLQMC) in each example.

\subsection{Single asset}
Firstly, we consider a portfolio consists of a single put option, i.e., $d=1$. This example was studied in \cite{broa:2011}.

In the simulation, the outer random variable is generated
according to
\begin{displaymath}
\omega = S_\tau=S_0\exp\{(\mu-\sigma^2/2)\tau+\sigma\sqrt{\tau}Z\},
\end{displaymath}
where the real-valued risk factor $Z$ is a standard normal random variable. The portfolio value change is
\begin{displaymath}
g(\omega) = v_0-\mbe[e^{-\mu_0(T-\tau)}(K-S_T(\omega,W))^+|\omega],
\end{displaymath}
where the expectation is taken over the random variable
$W$, which is a standard normal random variable independent with $Z$, and $S_T(\omega,W)$ is given by
\begin{displaymath}
S_T(\omega,W)= \omega\exp\{(\mu_0-\sigma^2/2)(T-\tau)+\sigma\sqrt{T-\tau}W\}.
\end{displaymath}

Now let $X = v_0-e^{-\mu_0(T-\tau)}(K-S_T(\omega,W))^+$. For $\omega=y$, $X$ can be generated via
\begin{displaymath}
X(y) = \psi(u;y)= v_0-e^{-\mu_0(T-\tau)}(K-y\exp\{(\mu_0-\sigma^2/2)(T-\tau)+\sigma\sqrt{T-\tau}\Phi^{-1}(u)\})^+.
\end{displaymath}
Note that $\psi(u;y)$ is a decreasing function with respect to $u$. So the variation of Hardy and Krause for $\psi(u;y)$ can be computed easily, i.e., for any $y$,
\begin{displaymath}
V_{\mathrm{HK}}(\psi(\cdot;y))=e^{-\mu_0(T-\tau)}K.
\end{displaymath}
As a result, when using scrambled $(t,1)$-sequence in base $b=2$ in the inner simulation, we have
\begin{displaymath}
\var{\hat g_m(y)}\le \frac{C}{m^2},
\end{displaymath}
where $C$ is a constant that does not depend on $y$, and the sample size has the form $m=2^\ell$. \cref{assm1} is thus satisfied with $\eta=2$ and $\sigma(y)\equiv C.$

Note that $g(y)$ is a strictly increasing continuous function. So the cumulative distribution function of $g(\omega)$ can be computed easily, namely,
\begin{align*}
\mbp[g(\omega)\le t] &= \mbp[\omega\le g^{-1}(t)]\\
&=\mbp[S_0\exp\{(\mu-\sigma^2/2)\tau+\sigma\sqrt{\tau}Z\}\le g^{-1}(t)]\\
&=\Phi([\log(g^{-1}(t)/S_0)-(\mu-\sigma^2/2)\tau]/(\sigma\sqrt{\tau})).
\end{align*}
The density of $g(\omega)$ is then given by
\begin{displaymath}
f(t) =\frac{\phi([\log(g^{-1}(t)/S_0)-(\mu-\sigma^2/2)\tau]/(\sigma\sqrt{\tau}))}{\sigma\sqrt{\tau}g^{-1}(t)g'(g^{-1}(t))}.
\end{displaymath}
It is easy to see that $f(0)<\infty$ so that \cref{assum:var} is satisfied. The MLQMC thus gives a complexity of $O(\epsilon^{-2}(\log \epsilon)^2)$ by \cref{eq:complexity}.

\begin{figure}[htbp]
  \centering
  \subfigure[]{\label{ex2a}\includegraphics[width=4.26cm]{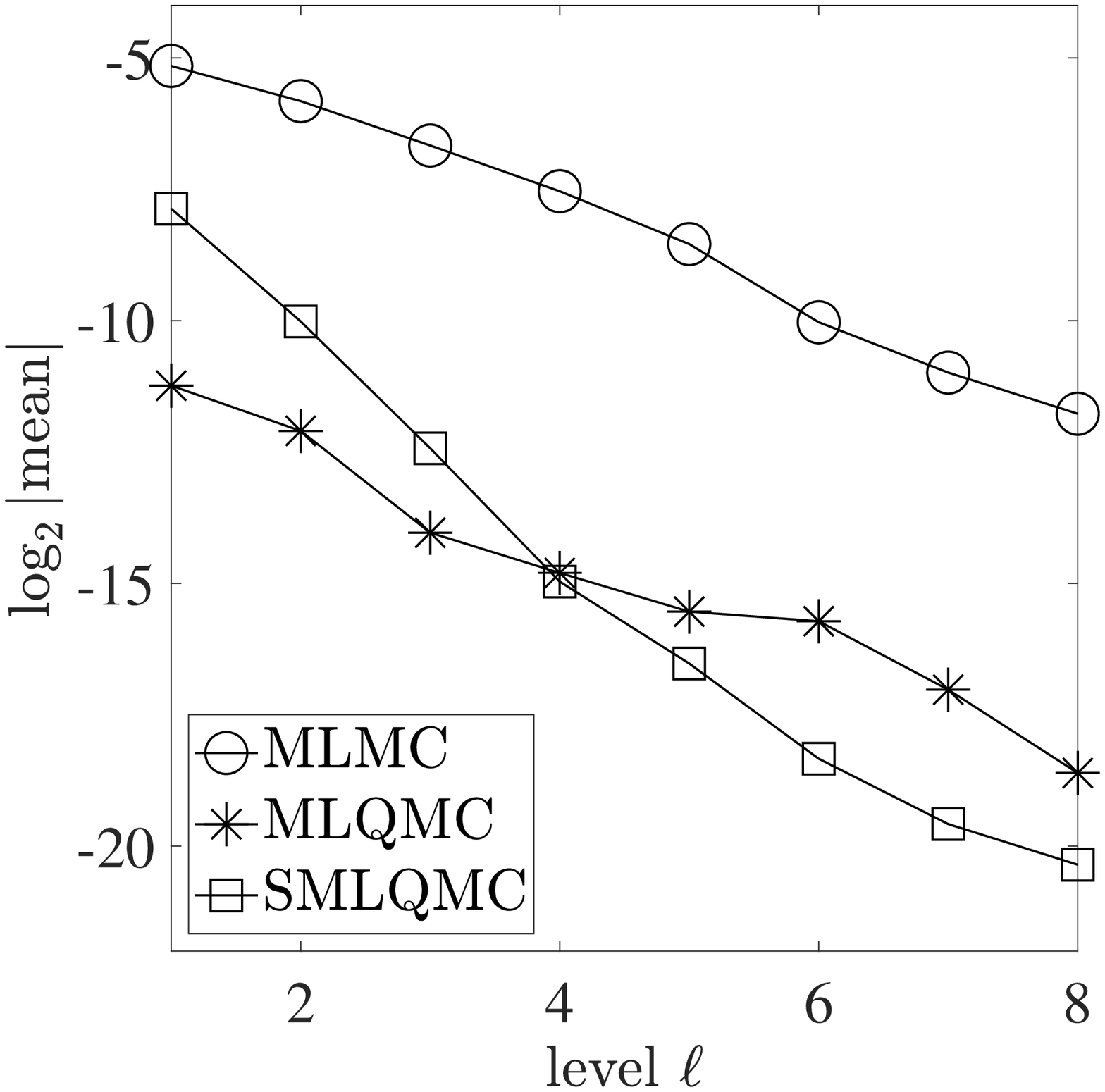}}
  \subfigure[]{\label{ex2b}\includegraphics[width=4.26cm]{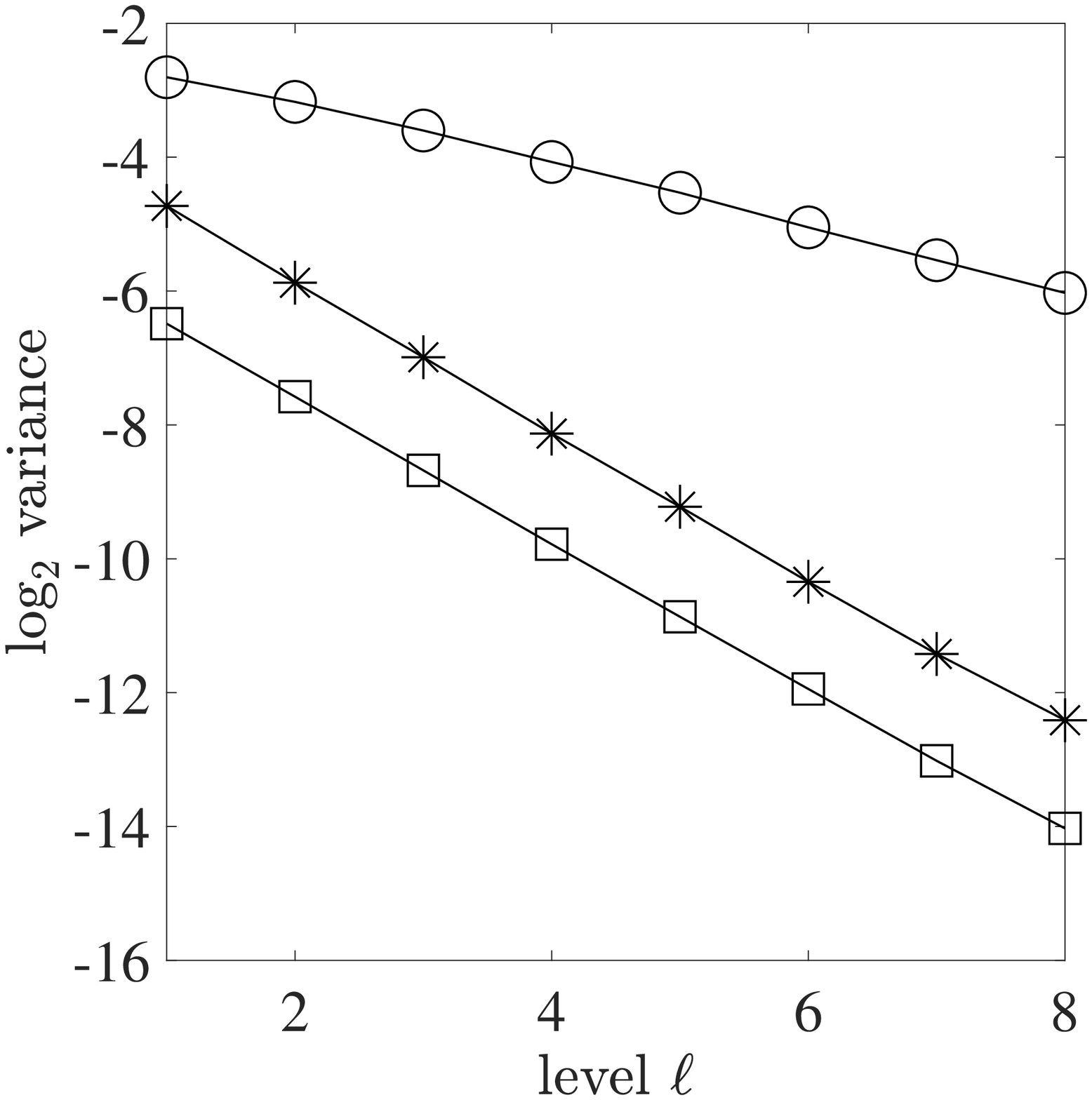}}
  \subfigure[]{\label{ex2c}\includegraphics[width=4.26cm]{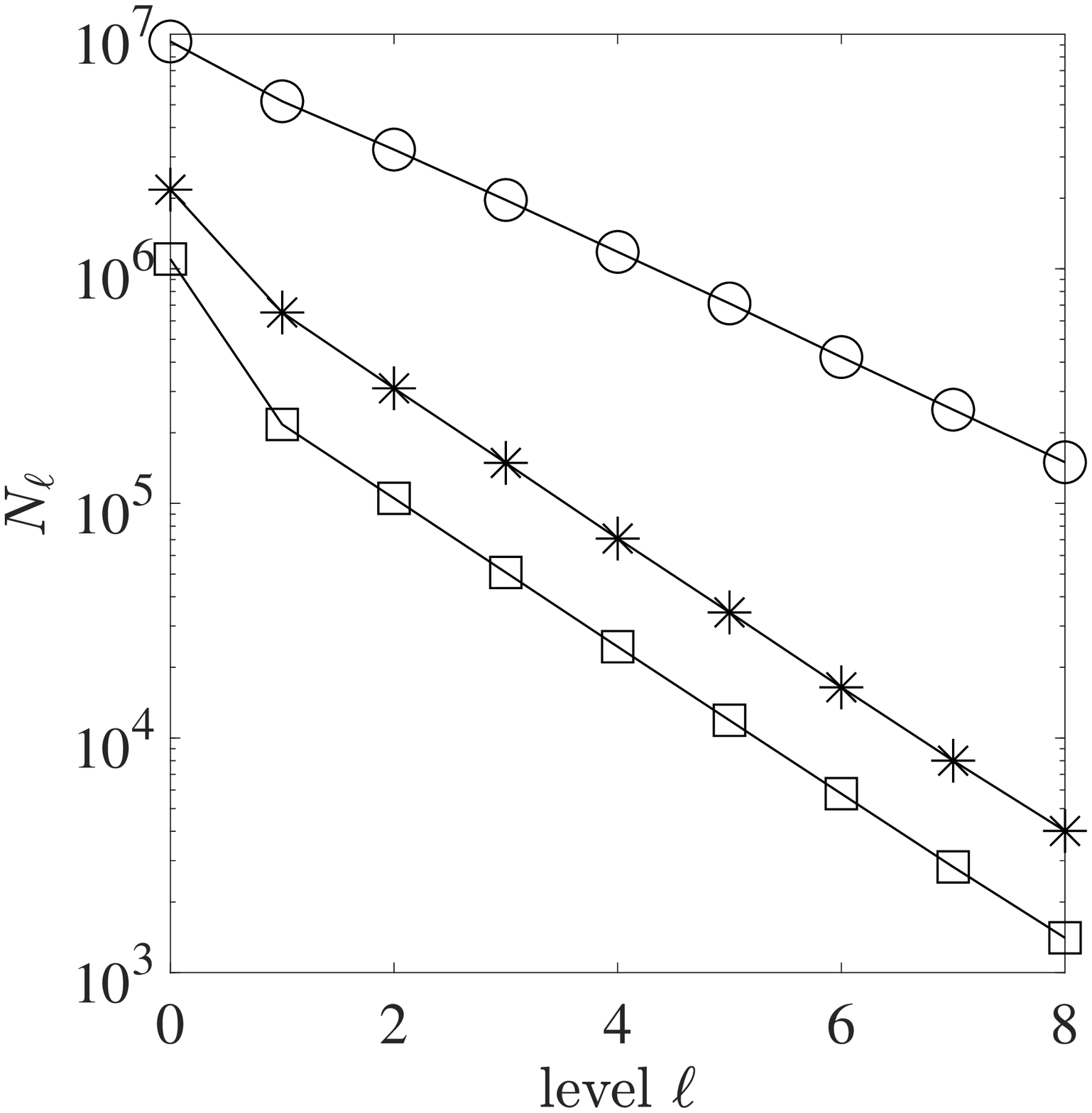}}
  \subfigure[]{\label{ex2d}\includegraphics[width=4.26cm]{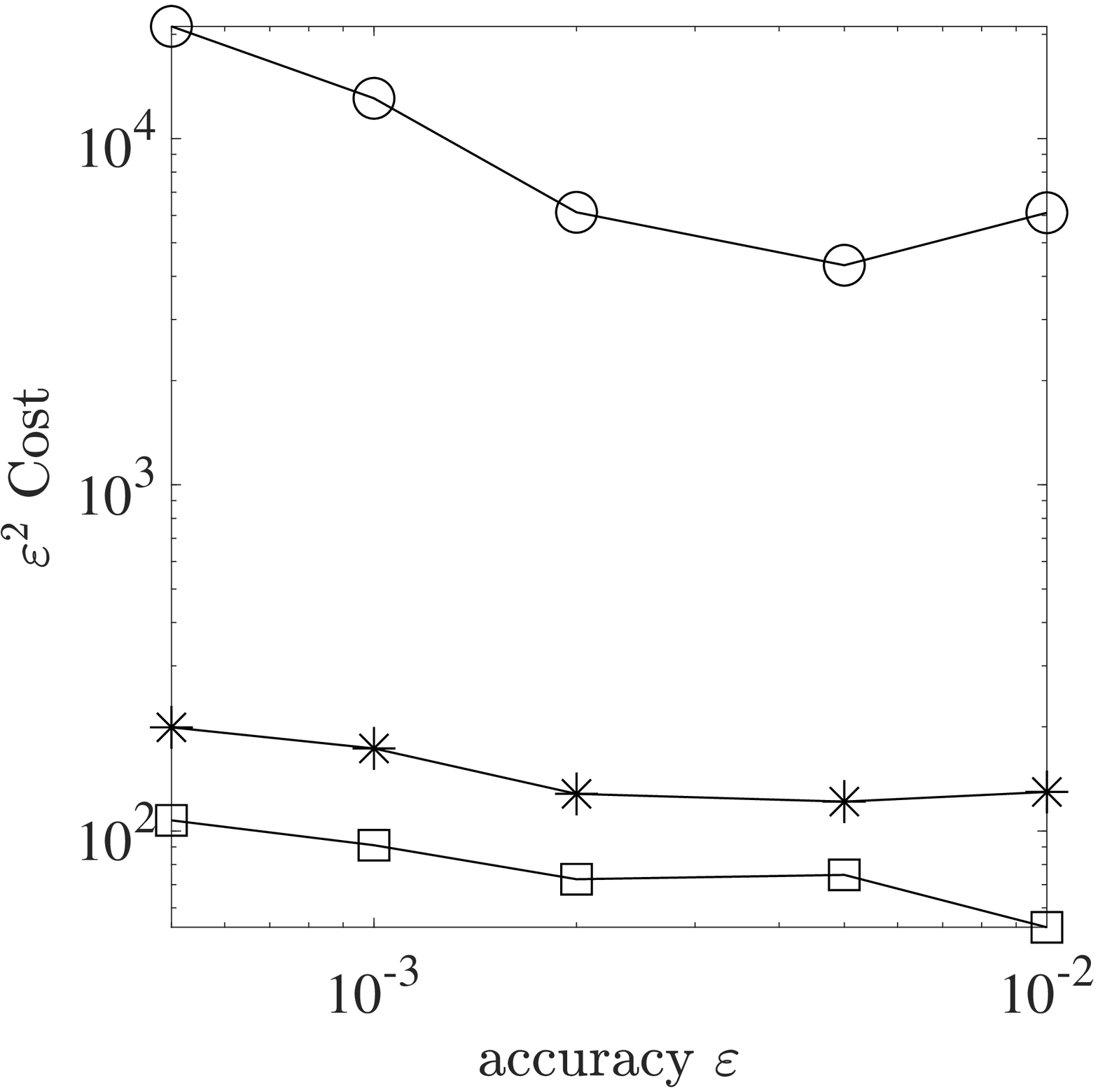}}
  \subfigure[]{\label{ex2e}\includegraphics[width=4.26cm]{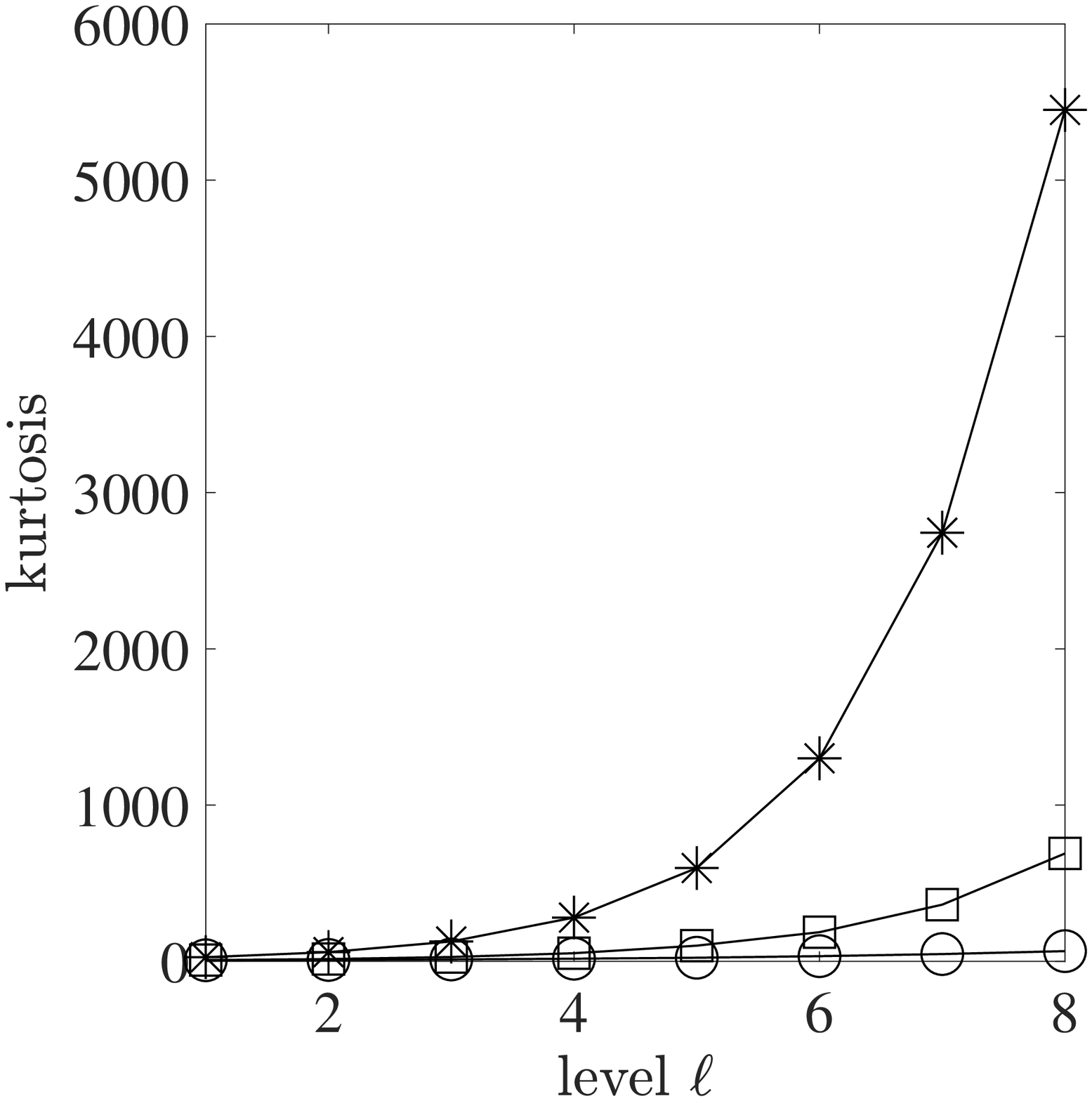}}
  \subfigure[]{\label{ex2f}\includegraphics[width=4.26cm]{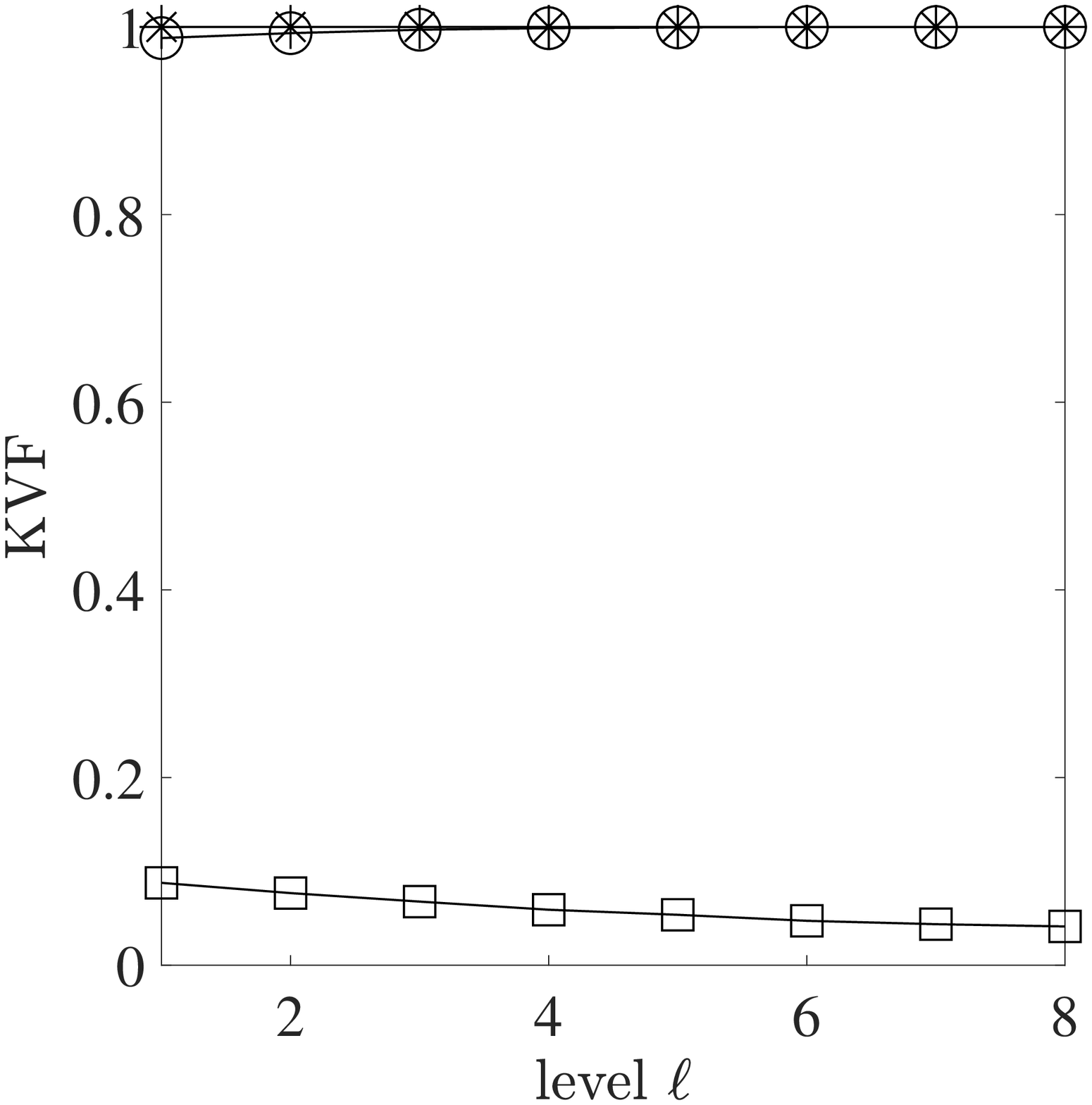}}
  \caption{Comparison of MLMC, MLQMC and SMLQMC for the single asset case.}
  \label{fig:ex1comparison}
\end{figure}

The parameters in this case are set as follows: $S_0=100$, $\mu = 8\%$, $\mu_0=3\%$, the annualized volatility $\sigma=20\%$, the strike of the put option $K = 95$, the maturity $T=0.25$ years (i.e., three months), and the risk horizon $\tau = 1/52$ years (i.e., one
week). With these parameters, the initial value of the
put option is $v_0=1.669$ by the Black--Scholes formula.
The inner expectation $g(\omega)$ can be computed explicitly by Black-Scholes formula. We choose $c=0.476887$ so that the loss probability $\theta=0.3$. For this one-dimensional problem, we take $r=2$ in the smoothed MLQMC. The results on testing the weak convergence and the strong convergence in \cref{fig:ex1comparison} are based on 500,000 outer samples in each level.

\cref{ex2a} displays the weak convergence of the three methods, from which we can see that both crude MLQMC and smoothed MLQMC method result in larger $\alpha$ than the MLMC method. This implies that QMC methods enjoy a better weak convergence and converge to the true value faster.

\cref{ex2b} shows the comparison of strong convergence, which plays a central role in MLMC methods. It is obvious that QMC methods work much better than MLMC, yielding the slopes of the lines lower than $-1$. Thus, QMC methods make the constant $\beta$ exceed $\gamma=1$, which means the variance $V_\ell$ decreases faster than the cost $C_\ell$ increases. And the smoothed MLQMC method shares the same rate with the crude MLQMC method but achieves an even smaller variance. As a result, the costs in each level are reduced accordingly, which is confirmed by \cref{ex2c}.

As we stated earlier, QMC methods are able to accelerate the strong convergence, leading to  $\beta>\gamma=1$ in \cref{thm:mlmc}. Then the total cost falls in the third regime in \cref{thm:mlmc}, which is the optimal complexity bound $O(\epsilon^{-2})$ for MLMC methods. The numerical results support this in \cref{ex2d}. Both QMC methods are of the best complexity, while the smoothed MLQMC method reduces further the computational burden.

\cref{ex2e} displays the tendency of kurtosis. As we explained, the kurtosis increases with the variance decreasing as $\ell$ gets larger. This phenomenon is inevitable due to the structure of the indicator function. But combined with the smoothed method, the increasing of kurtosis slows down obviously, which means the smoothed MLQMC method takes effect on high-kurtosis phenomenon.

More visually, in \cref{ex2f}, we can see the KVFs of the MLMC and the crude MLQMC methods are close to 1 in all the cases, while the smoothed MLQMC method yields much smaller KVF. This implies that the smoothed method slows down the rate of kurtosis increasing. In this sense, we can see the effect of smoothed MLQMC on overcoming the high-kurtosis phenomenon and catastrophic coupling.

\subsection{Multiple assets}
Now we consider a portfolio consists of $d$ European call options, which was studied in \cite{Hong2017}. Given the outer sample $\omega=\bm S_\tau=(S^1_\tau,\dots,S^d_\tau)$ which denotes the prices of stocks at the risk horizon $\tau$ under real-world measure, and $\bm K=(K^1,\dots,K^d)$, the strike prices for call options, the portfolio value change is
\begin{displaymath}
g(\omega) = V_0-\mbe[e^{-\mu_0(T-\tau)}\sum_{i=1}^d(S^i_T(\omega^i,\bm W)-K^i)^+|\omega],
\end{displaymath}
where $\omega^i$ is the $i$th element of the vector $\omega$ and the expectation is taken over the random variable
$\bm W=(W^1,\dots,W^d)\sim N(\bm 0,\bm I_d)$, and samples of
$$S^i_T(\omega^i,\bm W)
=\omega^i\exp\{(\mu_0-\frac{1}{2}\sum_{j=1}^d\sigma^2_{ij})(T-\tau)+\sum_{j=1}^d\sigma_{ij}\sqrt{T-\tau}W^j\}$$
are simulated under the risk-neutral measure.

To handle the effect of high dimensionality, we combine the gradient PCA (GPCA) method  proposed by Xiao and Wang \cite{Xiao2017} within SMLQMC, which is abbreviated as GMLQMC hereafter. GPCA is a general method to reduce the effective dimension of functions for improving the efficiency of QMC method. Here we apply GPCA in the inner simulation. We also examine the antithetic sampling of GMLQMC (abbreviated as AMLQMC). As we know, using an antithetic estimator does not change the variance convergence rate in the crude MLMC because of the indicator function. It is of interest to test whether antithetic sampling can get more benefit in the smoothed method.

The parameters in our experiments are set as follows: $S^1_0=\dots=S^d_0=100$, $\mu = 8\%$, $\mu_0=5\%$, the strikes $K^1=\dots=K^d=95$, the maturity $T=0.1$, and the risk horizon $\tau =0.02$. Without loss of generality, we let $\Sigma=(\sigma_{ij})$ be a sub-triangular matrix satisfying $C=\Sigma\Sigma^T$ which corresponds with Cholesky decomposition of $C$, where $C_{ij}=0.3\cdot0.98^{|i-j|}$. The different decompositions of $C$ do not influence the efficiency of MC method, but take effect under QMC scheme, while the Cholesky decomposition is a common and standard method in QMC simulation. The threshold $c=20\%V_0$. We take $r=\sqrt2$ in the smoothed MLQMC methods.

\begin{figure}[htbp]
  \centering
  \subfigure[$d=4$]{\includegraphics[width=4.26cm]{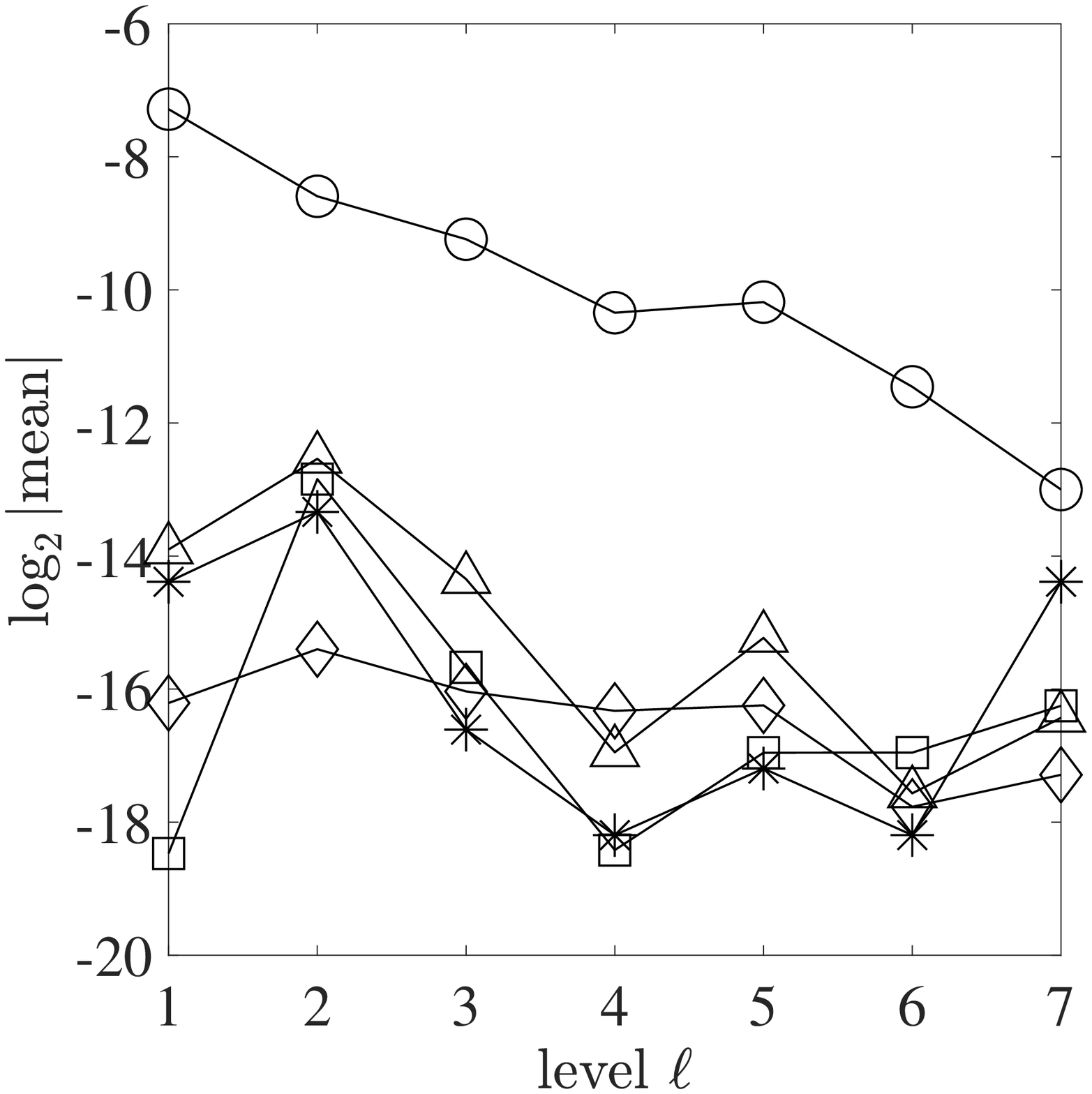}}
  \subfigure[$d=16$]{\includegraphics[width=4.26cm]{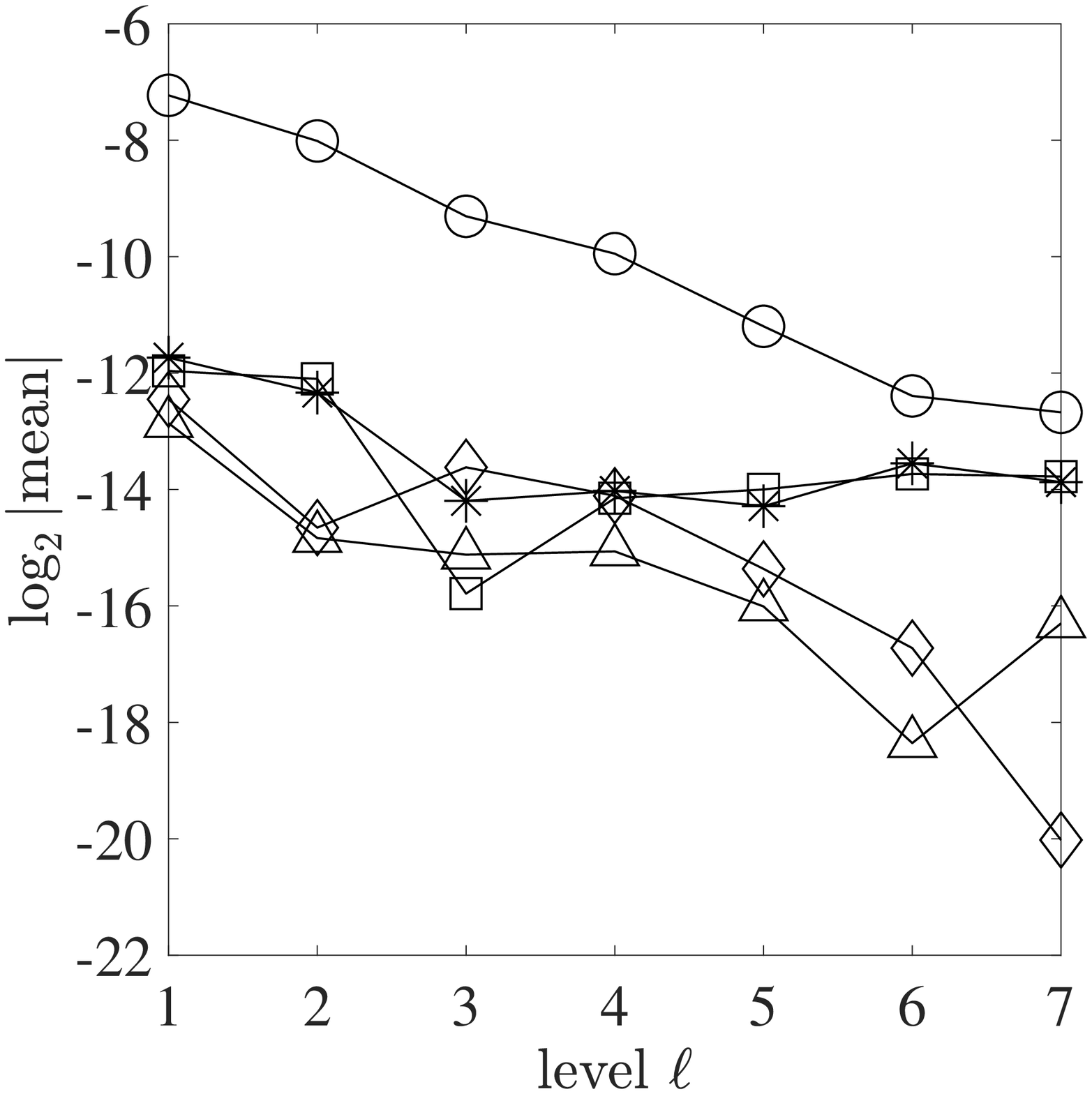}}
  \subfigure[$d=32$]{\includegraphics[width=4.26cm]{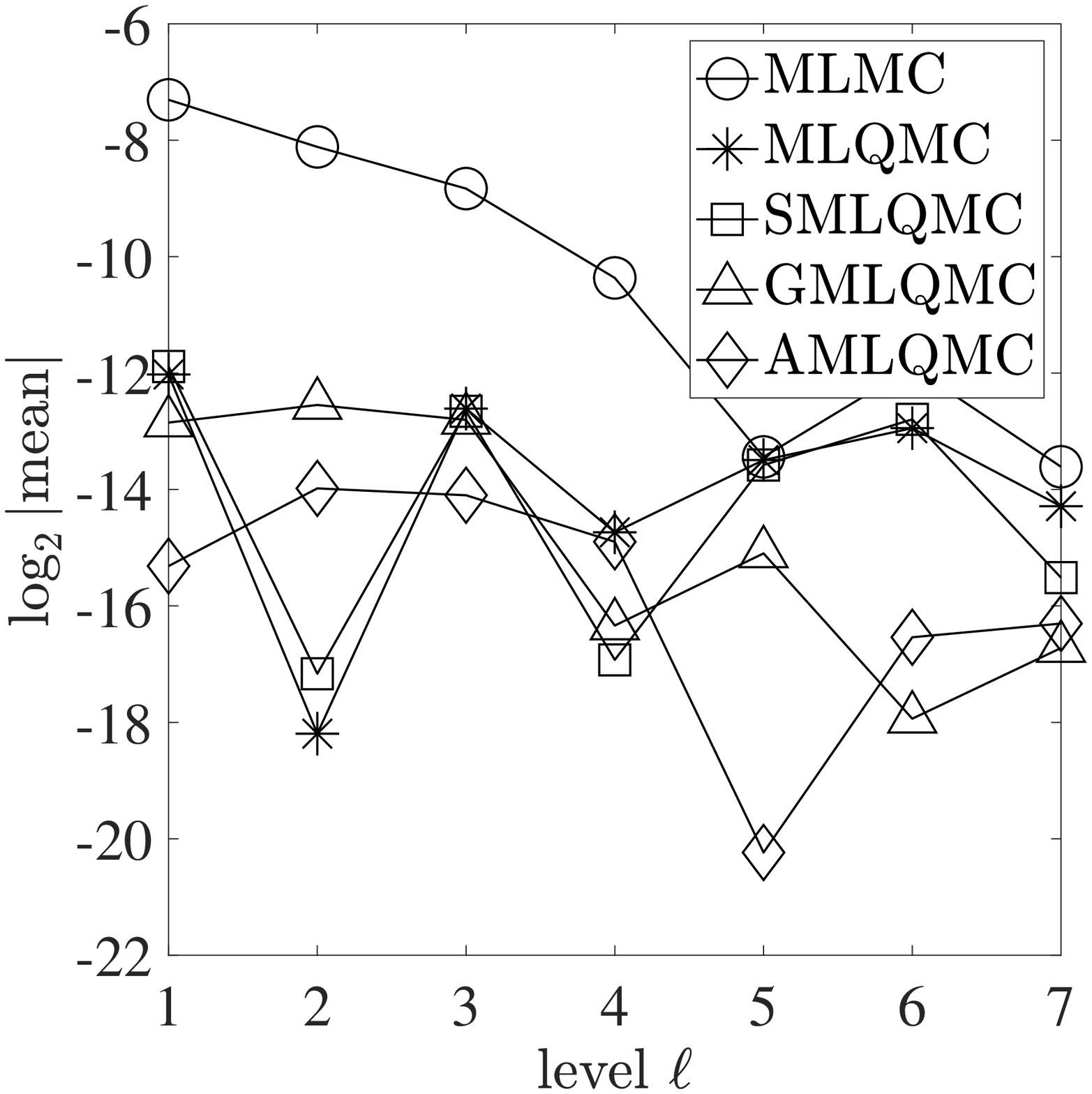}}
  \caption{Estimations of $|\mbe{Y_\ell}|$ or $|\mbe{\tilde Y_\ell}|$.}
  \label{fig:expection}
\end{figure}

Firstly, we perform the convergence tests for estimating $\alpha$ and $\beta$ in \cref{thm:mlmc}. The results are based on 300,000 outer samples in each level. \cref{fig:expection} shows the behaviors of the absolute values of the expectations of $Y_\ell$ for crude methods and $\tilde Y_\ell$ for smoothed methods. It can be seen that QMC methods have smaller values but with more volatility. This is caused by the catastrophic coupling mentioned in \cref{sec:review}, because there is only a tiny proportion of samples different from zero. The high sensitivity makes it difficult to predict the value of $\alpha$, even with such a size of outer samples.

\begin{figure}[htbp]
  \centering
  \includegraphics[width=4.26cm]{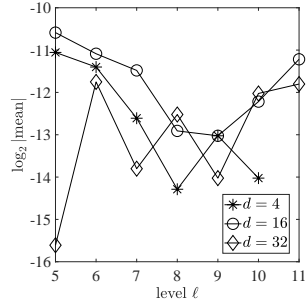}
  \caption{Estimations of $|\mbe{Y_\ell}|$ for MLMC.}
  \label{fig:MCtest}
\end{figure}

Not only MLQMC methods are faced with this difficulty, so the MLMC method is. \cref{fig:MCtest} shows the results of the MLMC method in deep levels. Under the same magnitude, both MLMC and MLQMC have large volatility. But it can be observed in \cref{fig:expection} that the rates of the MLQMC methods are not worse than the MLMC method and the value of $\alpha$ does not matter here actually for the strong convergence being good enough. Nevertheless, it should be noted that the smoothed MLQMC methods are affected less by the catastrophic coupling and look more stable.

\begin{figure}[htbp]
  \centering
  \subfigure[$d=4$]{\includegraphics[width=4.26cm]{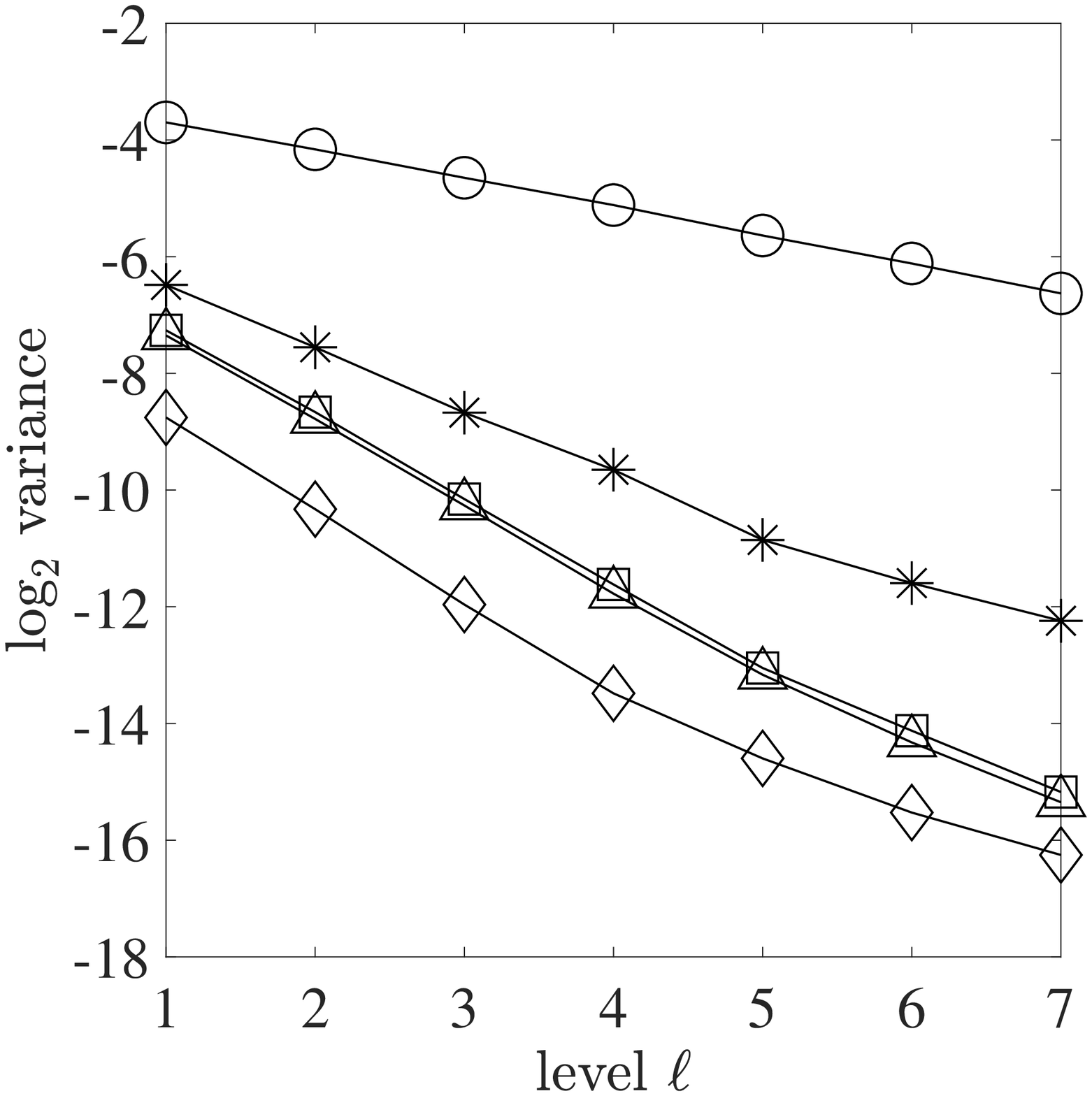}}
  \subfigure[$d=16$]{\includegraphics[width=4.26cm]{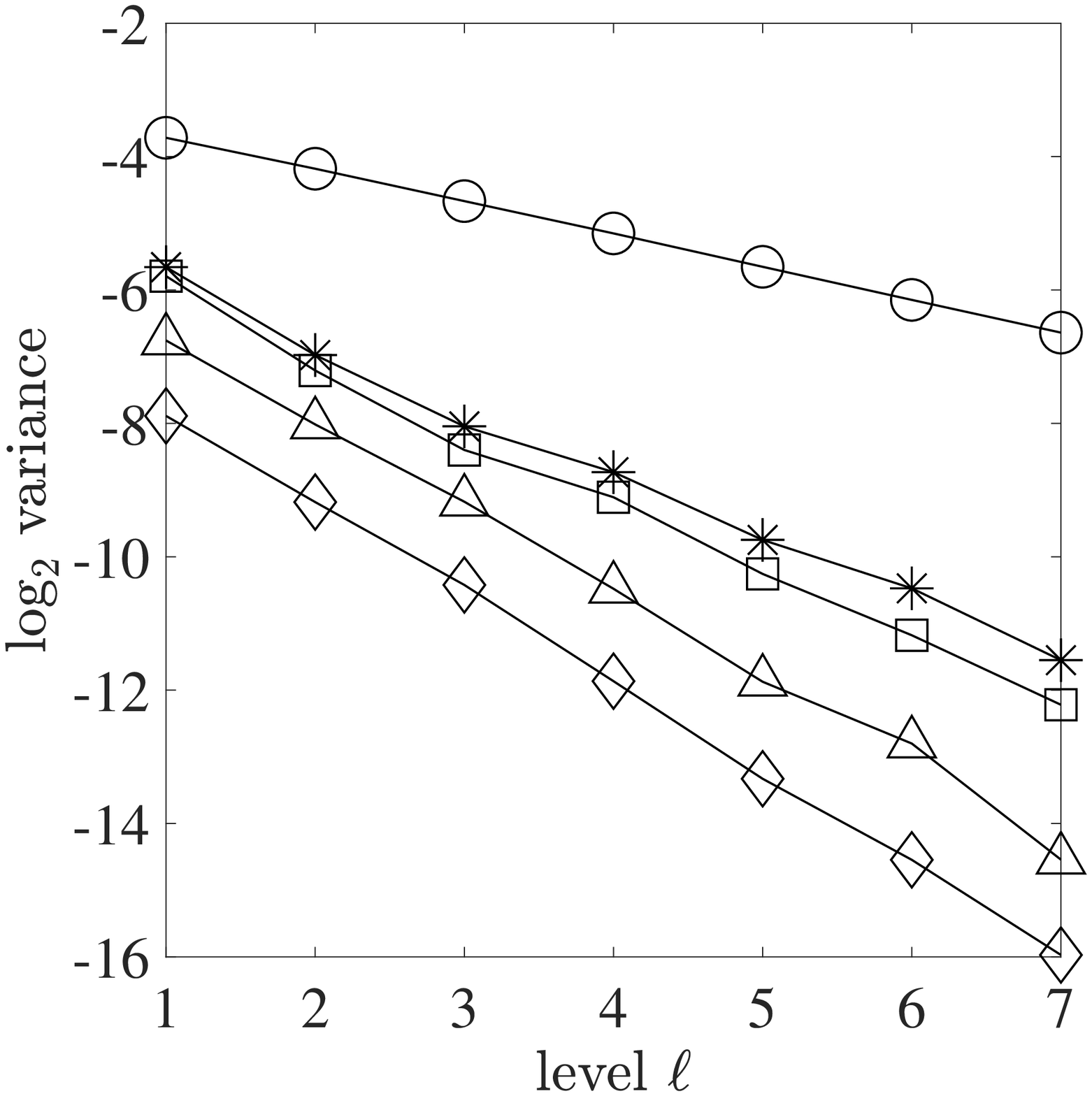}}
  \subfigure[$d=32$]{\includegraphics[width=4.26cm]{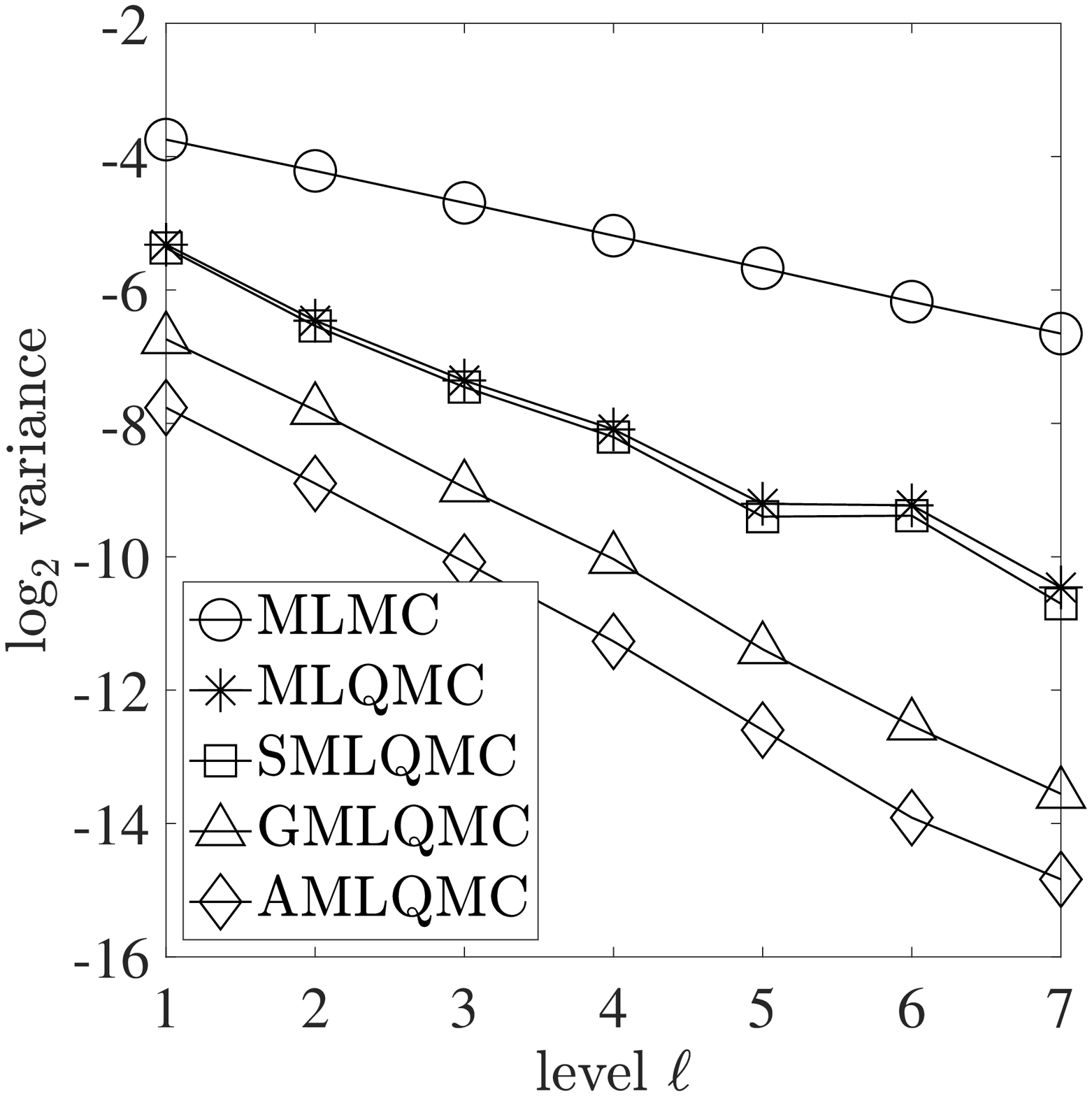}}
  \caption{Estimations of $\var{Y_\ell}$ or $\var{\tilde Y_\ell}$.}
  \label{fig:variance}
\end{figure}
\cref{fig:variance} shows the empirical variances of $Y_\ell$ and $\tilde Y_\ell$ for different levels, which can be used to predict the strong convergence rate $\beta$ by the usual linear regression. For the plain MLMC, we observe $\beta\approx 0.5$ for all dimensions. For the crude MLQMC method, the dimension $d$ has an impact on $\beta$, and $\beta\approx 1$ for moderately large $d=32$. When combined with GPCA method in the MLQMC method  (i.e., the GMLQMC method), we observe a larger $\beta$, usually $1.1$. When antithetic sampling method is applied in smoothed MLQMC method, $\beta$ is further improved to $1.2$. These insights suggest that antithetic sampling can benefit from the smooth coupling, achieving a larger $\beta$.
The strong convergence gets apparent improvement with QMC methods, in which $\beta=\gamma$ even $\beta>\gamma$. The total cost then is $O(\epsilon^{-2}(\log\epsilon)^2)$ even $O(\epsilon^{-2})$.

\begin{figure}[htbp]
  \centering
	\subfigure[$d=4$]{\includegraphics[width=4.26cm]{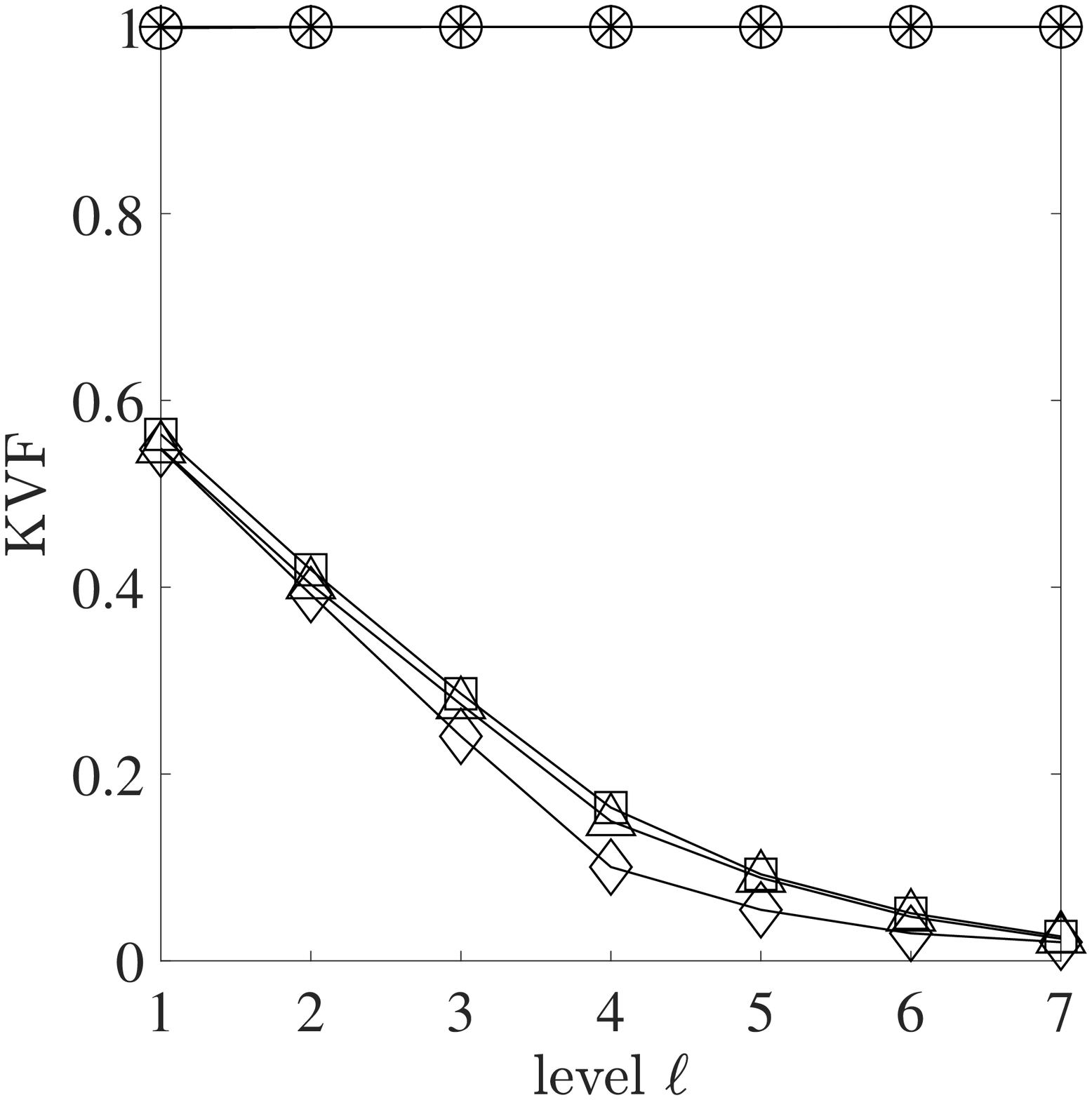}}
    \subfigure[$d=16$]{\includegraphics[width=4.26cm]{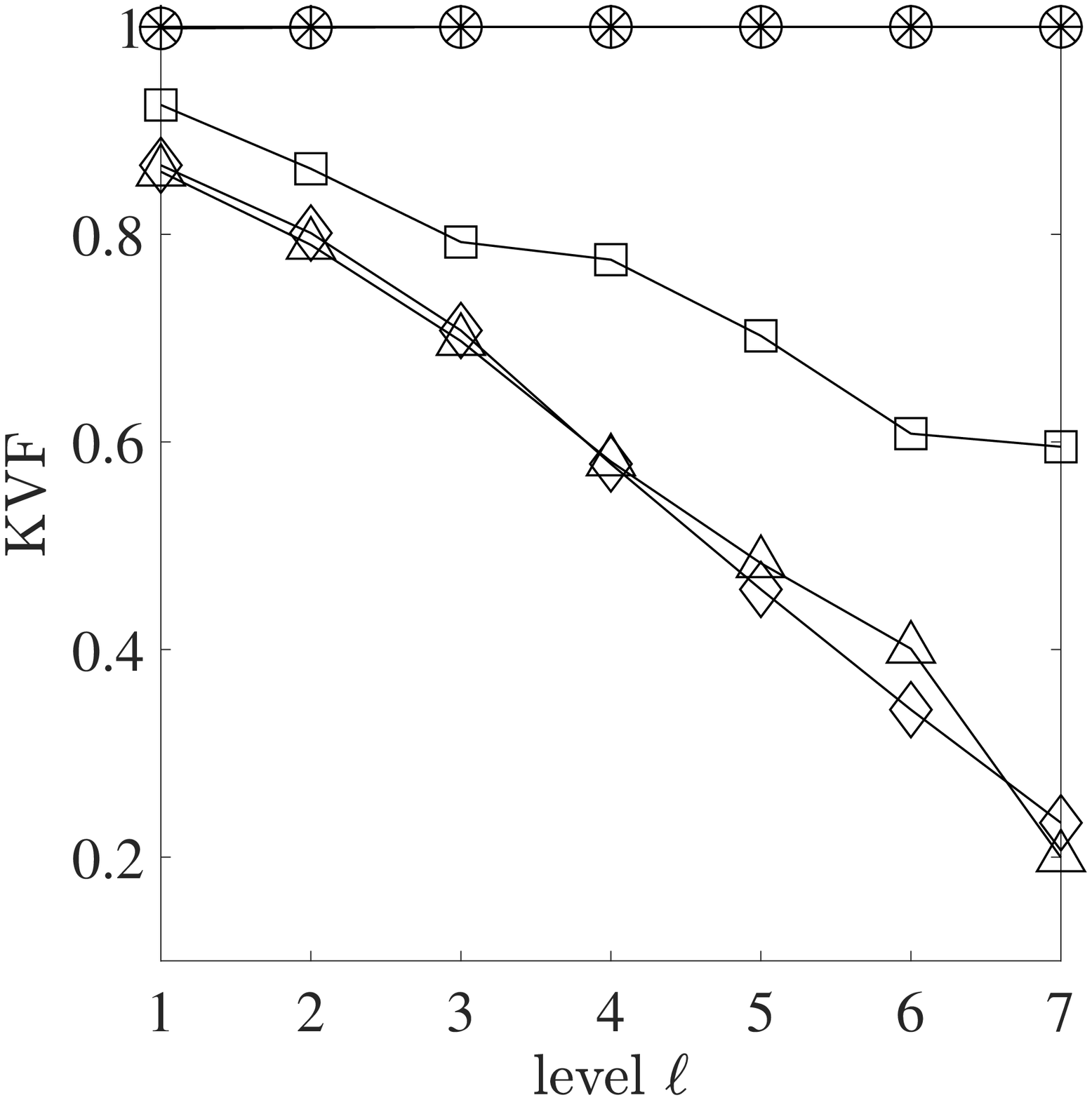}}
    \subfigure[$d=32$]{\includegraphics[width=4.26cm]{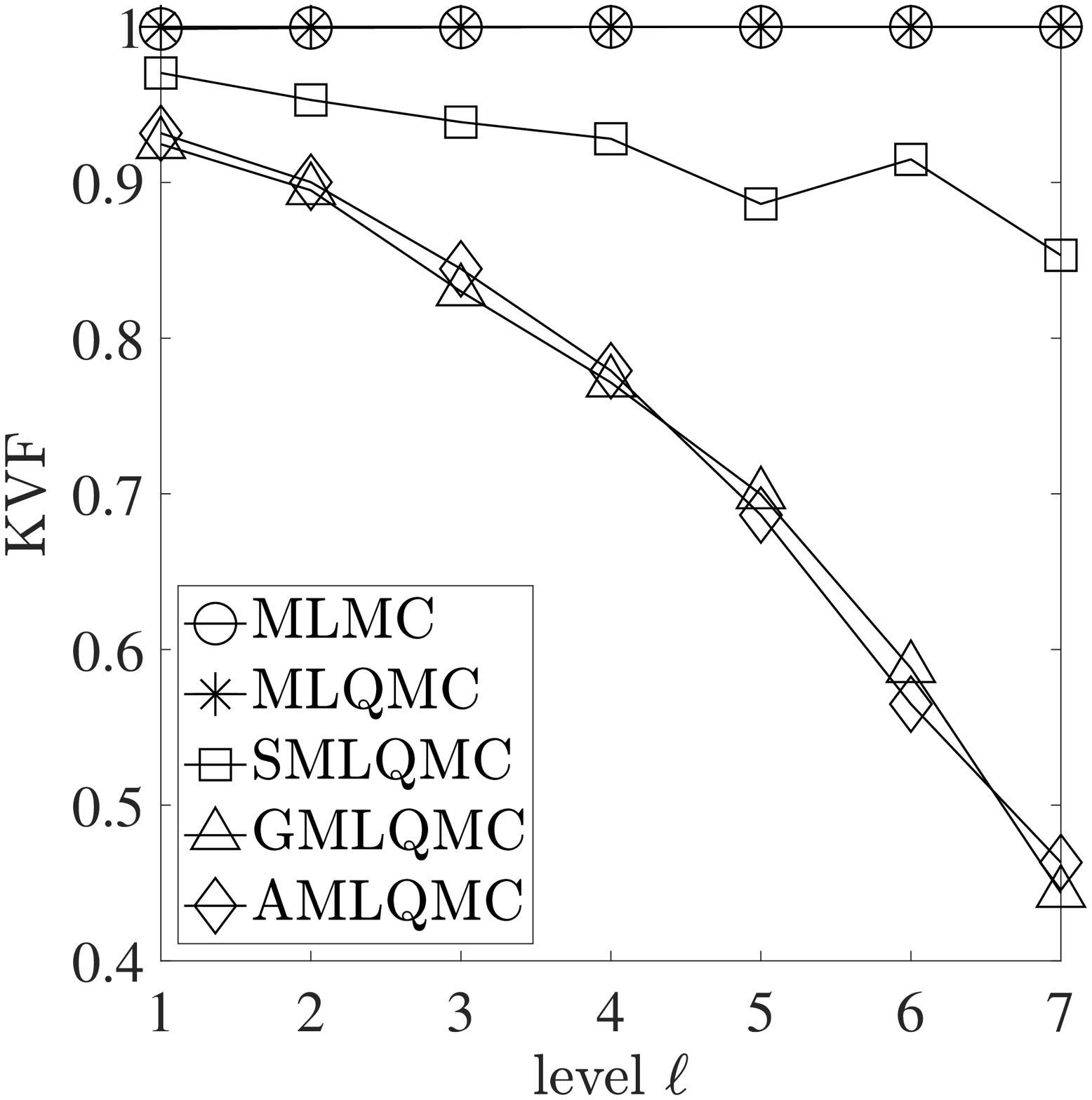}}
  \caption{Tests of KVF.}
\label{fig:KVF}
\end{figure}

Similar to the case of $d=1$, the kurtosis also becomes larger as the level $\ell$ increases. Here we focus on KVF to take both variance and kurtosis into consideration; see \cref{fig:KVF} for the results. It should be noticed that the curve of AMLQMC showed is 4 times KVF of AMLQMC actually, in order to balance the influence bought by the antithetic sampling. This is because the antithetic sampling can reduce the variance and kurtosis by 1/2 for the plain MLMC and crude MLQMC without changing the convergence rate, then it is fairer to compare 4 times KVF of the method with antithetic sampling. The MLMC and the MLQMC methods without the smooth coupling have KVFs close to 1. On the other hand, the smoothed methods achieve faster strong convergence and slower kurtosis increasing,  and so smaller values of KVF are observed. It can be seen that the KVF gets smaller in deeper levels, which means the smoothed MLQMC methods make the kurtosis increase slowly. In that sense, the smoothed MLQMC helps to overcome the catastrophic coupling and make the algorithm more efficient.

\begin{figure}[htbp]
  \centering
	\subfigure[$d=4$]{\includegraphics[width=4.26cm]{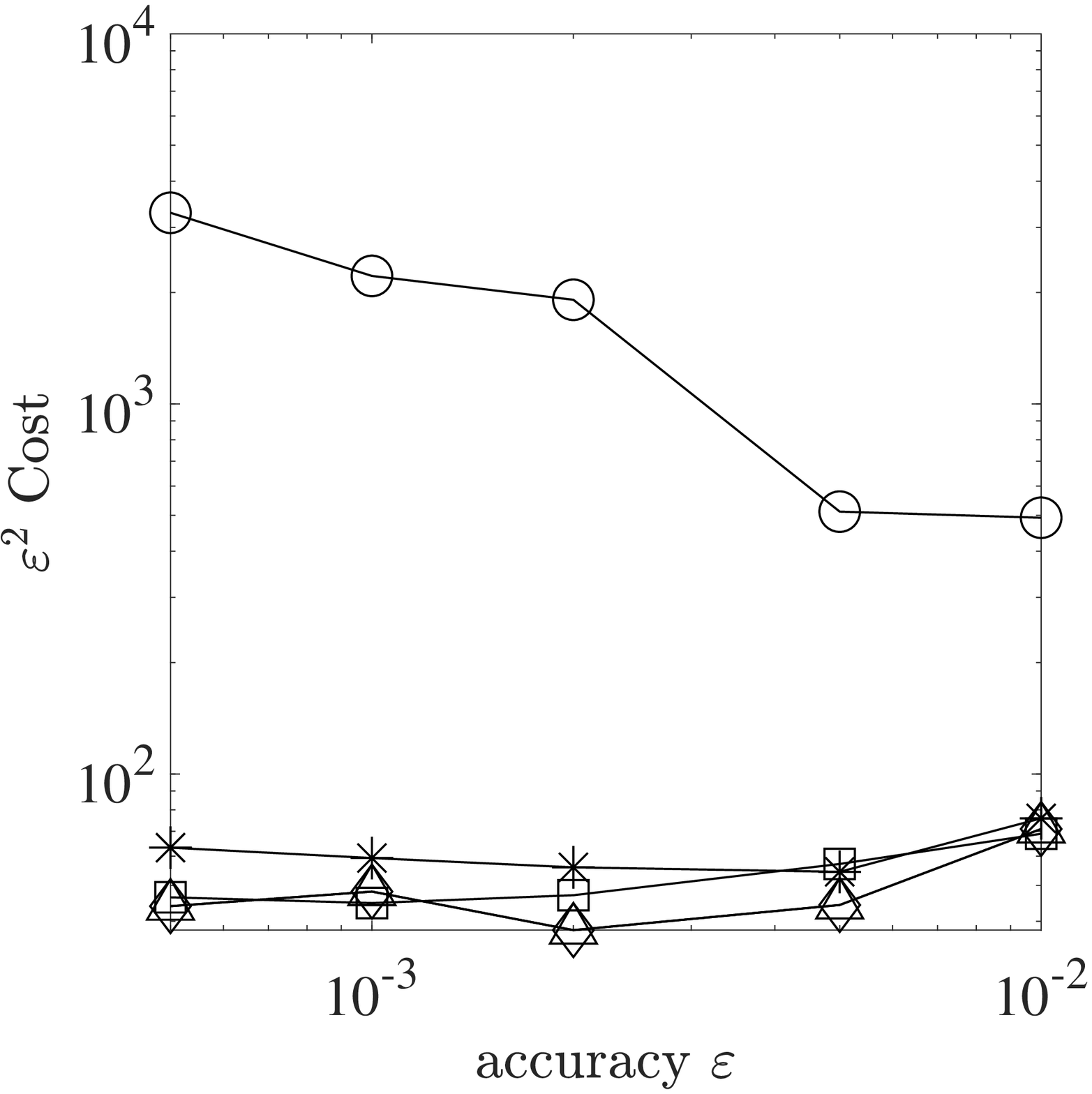}}
    \subfigure[$d=16$]{\includegraphics[width=4.26cm]{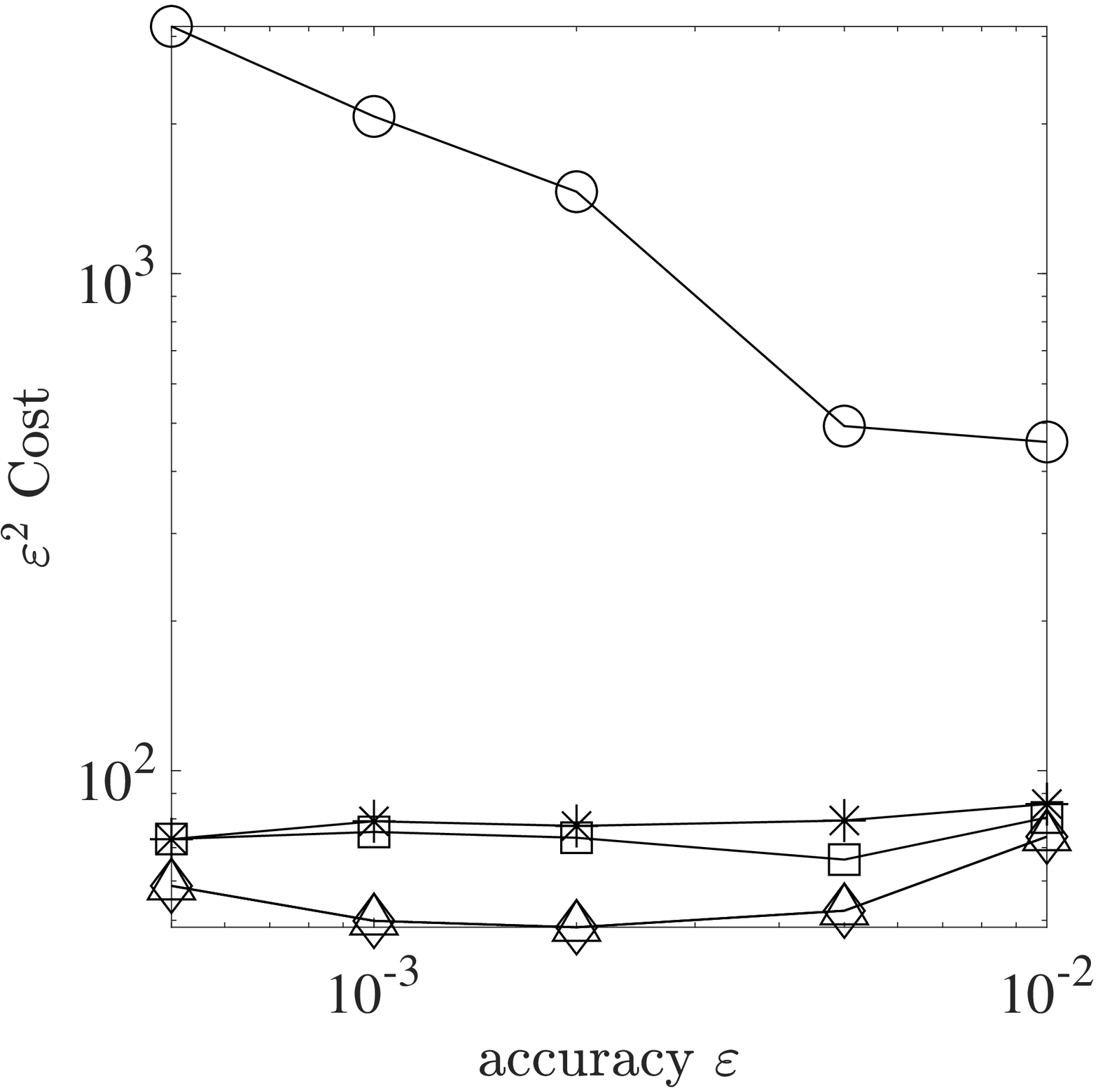}}
    \subfigure[$d=32$]{\includegraphics[width=4.26cm]{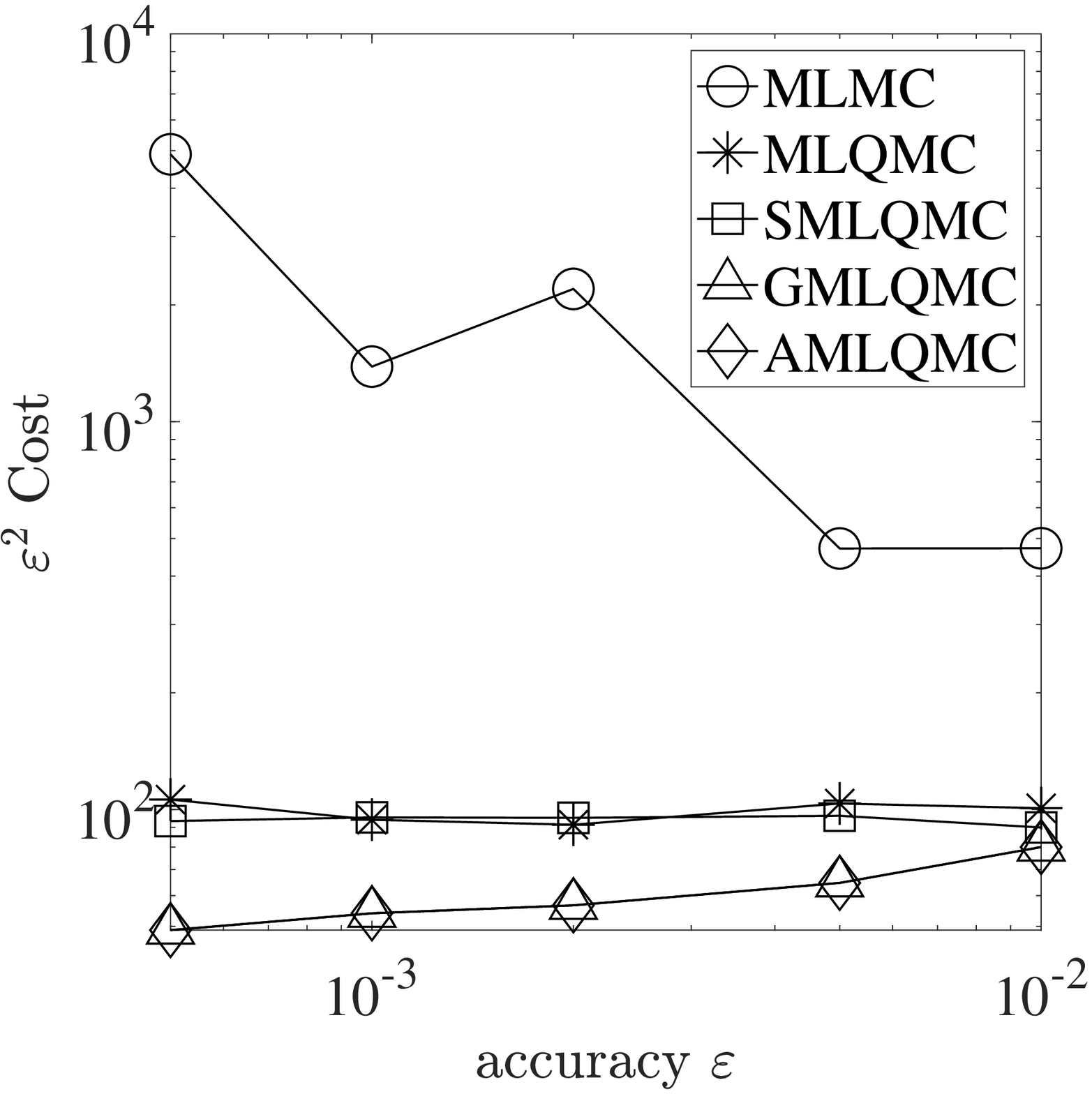}}
  \caption{Tests of total cost.}
\label{fig:totalcost}
\end{figure}

\cref{fig:totalcost} shows the total computation cost of each method. We can see that the complexity bound of MLQMC methods is $O(\epsilon^{-2})$ or $O(\epsilon^{-2}\log(\epsilon)^2)$ as expected. The smooth coupling indeed helps to reduce the total cost.

\begin{figure}[htbp]
  \centering
	\subfigure[$d=32$]{\includegraphics[width=4.26cm]{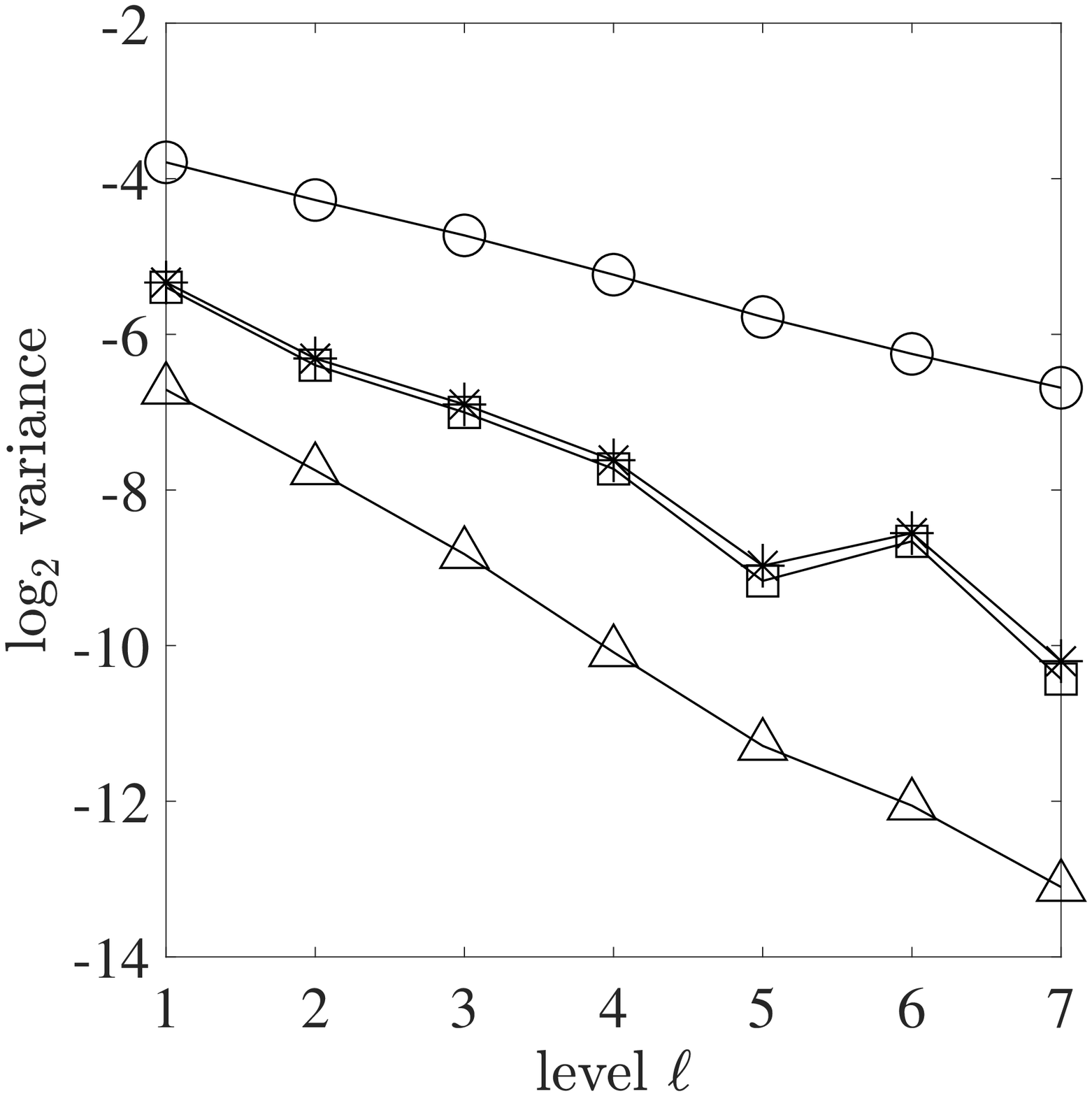}}
	\subfigure[$d=64$]{\includegraphics[width=4.26cm]{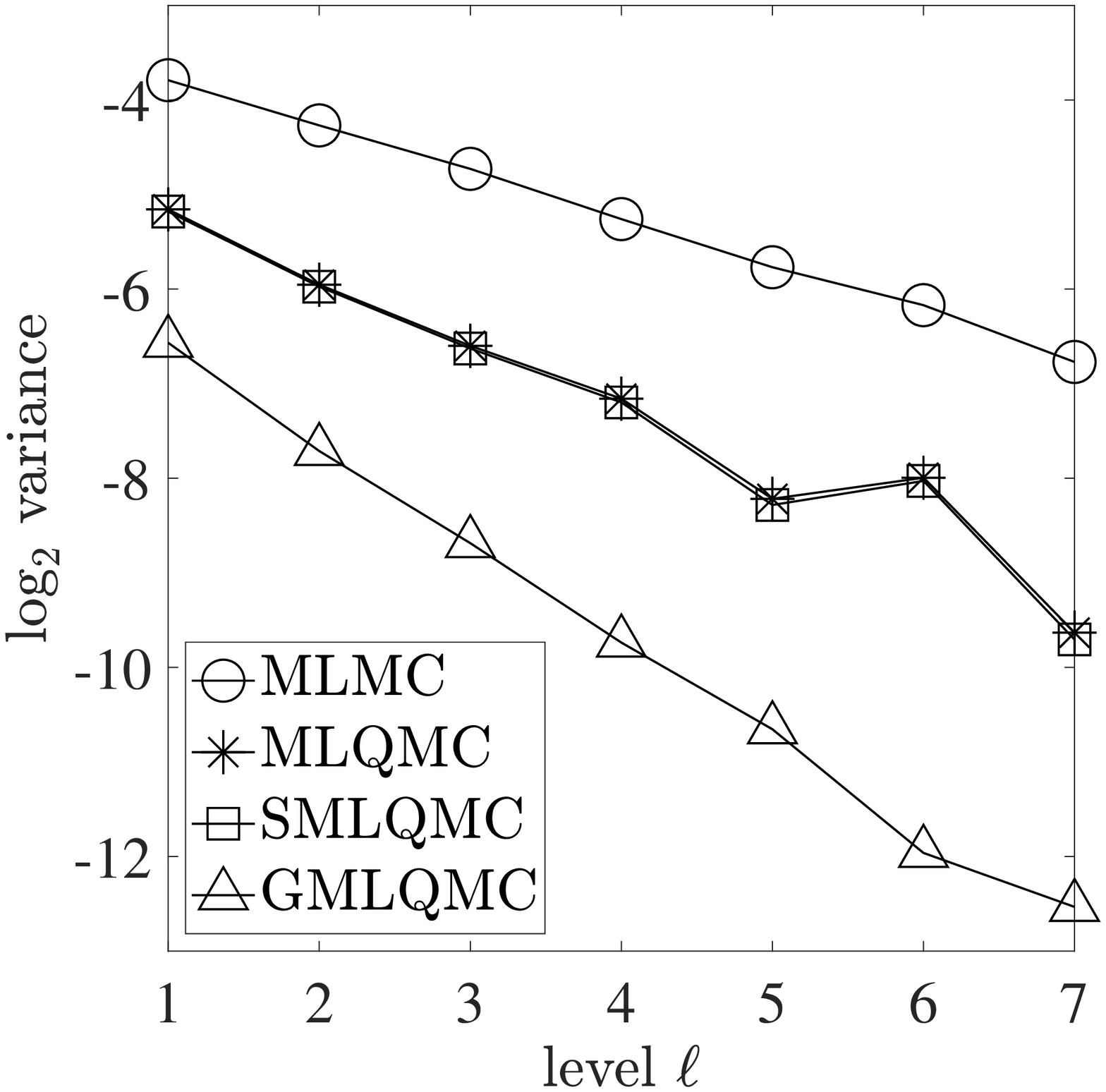}}
  \caption{Tests of dimension effects.}
\label{fig:dimcompare}
\end{figure}

As shown in \cref{fig:variance} and \cref{fig:totalcost}, the crude MLQMC method is good enough, almost attaining the optimal complexity, even for the large dimension $d=32$. The effect of the dimension $d$ on the performance of the crude MLQMC method seems minor. A possible explanation is that the integrand in the inner simulation may enjoy low effective dimension, which is friendly to QMC method \cite{Wang2006}. It is clear that the choice of the matrix $\Sigma$ has an impact on the effective dimension of the integrand.

Now we examine another risk factors structure, with $\Sigma$ satisfying $\tilde C=\Sigma\Sigma^T$, where $\tilde C_{ij}=0.3(d-|i-j|)/d$. \cref{fig:dimcompare} shows the decays of the variances of $Y_\ell$ and $\tilde Y_\ell$  with the new matrix $\Sigma$ for $d=32$ and $d=64$. For these cases, the strong convergence rate $\beta$ of the crude MLQMC methods is not much better than the MLMC method. But when it is combined with the dimension reduction technique GPCA, the strong convergence is improved dramatically, in which $\beta$ exceeds $1$. We thus conclude that the dimension of the integrand in the inner simulation has a significant impact on the performance of MLQMC methods. As the dimension gets higher, plain QMC methods may render a small $\eta<2$  in \cref{assm1}. When combined with dimension reduction methods, such as the GPCA method, the efficiency of QMC methods can be reclaimed, achieving a better strong convergence. This highlights the importance of using dimension reduction methods in nested MLQMC.

\section{Conclusion}\label{sec:conclusion}
In this paper, we have incorporated randomized QMC methods into MLMC to deal with financial risk estimation via nested simulation. We have proved that the new MLQMC estimator can achieve better complexity bound under some assumptions. At the meantime, we discussed the catastrophic coupling phenomenon caused by the character of RQMC points and discontinuity of the indicator function in the problem. Then we developed a new smoothed MLMC method and we proved that the smoothed MLMC method still reserves the advantages of MLMC without extra requirement. The smoothed method can also take advantage of antithetic sampling, which does not work for the original method. The superiority of the (smoothed) MLQMC methods over the original MLMC method has been empirically shown in numerical studies.

Further improvements for the MLQMC methods can be expected in the follow respects.
On the one hand, we only applied RQMC methods in the inner simulation. If RQMC methods are used not only in the inner sampling but also in the outer sampling, the computational cost may be further reduced. And the smooth coupling estimator is more suitable for RQMC methods and an improved efficiency can be expected.
On the other hand, we take a fixed number of inner samples in an identical level in this paper. If we are able to allocate the sample size according to the outer samples adaptively under the QMC scheme, like the way in the Monte Carlo scheme \cite{broa:2011,giles:2018}, a further improvement can be also expected.

\section*{Acknowledgments}
ZH would like to appreciate Prof. Micheal
B. Giles and Dr. Abdul-Lateef Haji-Ali  for useful discussions and comments at an early stage of this research.


\end{document}